\definecolor{myred}{RGB}{228,26,28}
\definecolor{myblue}{RGB}{55,124,184}
\definecolor{mygreen}{RGB}{77,175,74}
\setlist[itemize]{label={$\vcenter{\hbox{\tiny$\bullet$}}$}}
\let\originalleft\left
\let\originalright\right
\renewcommand{\left}{\mathopen{}\mathclose\bgroup\originalleft}
\renewcommand{\right}{\aftergroup\egroup\originalright}
\def\C{{\mathbb C}}
\def\R{{\mathbb R}}
\def\N{{\mathbb N}}
\def\Z{{\mathbb Z}}
\def\Hc{{\mathcal H}}
\def\Rc{{\mathcal R}}
\def\Nel{{N_{\rm el}}}
\def\Nex{{N_{\rm ex}}}
\def\Nb{{N_{\rm b}}}
\def\Np{{N_{\rm p}}}
\def\L{{\rm L}}
\def\H{{\rm H}}
\def\i{{\rm i}}
\newcommand{\eg}{\emph{e.g.}}
\renewcommand{\Re}{{\rm Re}}
\newcommand{\Ecut}{E_{\rm cut}}
\newcommand{\abs}[1]{\left| #1 \right|}
\newcommand{\Span}{{\rm Span}}
\newcommand{\Ran}{{\rm Ran}}
\newcommand{\Forall}{\forall\ }
\newcommand{\prt}[1]{\left( #1 \right)}
\newcommand{\set}[1]{\left\{ #1 \right\}}
\newcommand{\cro}[1]{\left\langle #1 \right\rangle}
\newcommand{\norm}[1]{\left\| #1 \right\|}
\newcommand{\eF}{\varepsilon_{\rm F}}
\newcommand{\e}{{\rm e}}
\renewcommand{\d}{{\rm d}}
\newcommand{\dH}{\delta V}
\newcommand{\df}{\delta f}
\newcommand{\drho}{\delta \rho}
\newcommand{\dphi}{\delta \phi}
\newcommand{\deF}{\delta\eF}
\newcommand{\ket}[1]{\left|#1\right\rangle}
\newcommand{\bra}[1]{\left\langle#1\right|}
\newcommand{\wt}{\widetilde}
\newcommand{\wb}{\overline}
\renewcommand{\lq}{\leqslant}
\newcommand{\gq}{\geqslant}
\theoremstyle{definition}
\newtheorem{remark}{Remark}
\newtheorem{proposition}{Proposition}
\title[Numerical stability and efficiency of response property calculations in
DFT]{Numerical stability and efficiency of response property calculations in
  density functional theory}
\author{Eric Canc\`es$^{1,2}$}
\author{Michael F. Herbst$^{3}$}
\author{Gaspard Kemlin$^{1,2}$}
\author{Antoine Levitt$^{1,2}$}
\author{Benjamin Stamm$^{4}$}
\address[1]{CERMICS, ENPC, France}
\address[2]{MATHERIALS team, Inria Paris, France}
\address[3]{Applied and Computational Mathematics, RWTH Aachen University, Germany}
\address[4]{IANS, University of Stuttgart, Germany}
\email{\tiny eric.cances@enpc.fr, herbst@acom.rwth-aachen.de, gaspard.kemlin@enpc.fr,
  antoine.levitt@inria.fr, best@ians.uni-stuttgart.de}
\begin{document}

\begin{abstract}
  Response calculations in density functional theory aim at computing
  the change in ground-state density induced by an external perturbation.
  At finite temperature these are usually performed by computing variations of
  orbitals, which involve the iterative solution of potentially
  badly-conditioned linear systems, the Sternheimer equations.
  Since many sets of variations of
  orbitals yield the same variation of density matrix
  this involves a choice of gauge.
  Taking a numerical analysis point of view
  we present the various gauge choices proposed in the literature
  in a common framework and study their stability.
  Beyond existing methods we propose a new approach,
  based on a Schur complement using extra orbitals
  from the self-consistent-field calculations,
  to improve the stability and efficiency of the iterative solution of
  Sternheimer equations. We show the success of this
  strategy on nontrivial examples of practical interest,
  such as Heusler transition metal alloy compounds,
  where savings of around 40\% in the number of required
  cost-determining Hamiltonian applications have been achieved.
\end{abstract}

\maketitle

\section{Introduction}

Kohn-Sham (KS) density-functional theory (DFT) \cite{hohenbergInhomogeneousElectronGas1964,
  kohnSelfconsistentEquationsIncluding1965} is the most popular
approximation to the electronic many-body problem in quantum chemistry and materials science. While not perfect, it offers a favourable
compromise between accuracy and computational efficiency for a vast majority of
molecular systems and materials. In this work, we focus on KS-DFT approaches aiming at
computing electronic ground-state (GS) properties.
Having solved the minimization problem underlying DFT directly yields
the ground-state density and corresponding energy.
However, many quantities of interest, such as interatomic forces,
(hyper)polarizabilities, magnetic susceptibilities, phonons spectra, or
transport coefficients, correspond physically to the response of GS quantities
to a change in external parameters (e.g.~nuclear positions, electromagnetic fields).
As such their mathematical
expressions involve derivatives of the obtained GS solution
with respect to these parameters.
For example interatomic forces are {\em first-order} derivatives of the GS energy with
respect to the atomic positions, and can actually be obtained without computing
the derivatives of the GS density, thanks to the Hellmann-Feynman theorem
\cite{hellmannEinfuehrungQuantenchemie1944}. On the other hand the computation
of any property corresponding to {\em second- or higher-order} derivatives of
the GS energy does require the computation of derivatives of the density. More
precisely, it follows from Wigner's $(2n+1)$ theorem that $n^{\rm th}$-order
derivatives of the GS density are required to compute properties corresponding to
$(2n)^{\rm th}$- and $(2n+1)^{\rm st}$-derivatives of the KS energy functional.
More recent applications, such as the design of machine-learned
exchange-correlation energy functionals, also require the computation of
derivatives of the ground-state with respect to
parameters, such as the ones defining the exchange-correlation functional
\cite{kasimLearningExchangeCorrelationFunctional2021,
  kirkpatrickPushingFrontiersDensity2021,liKohnShamEquationsRegularizer2021}.

\medskip

Efficient numerical methods for evaluating these derivatives are therefore needed.
The application of generic perturbation theory to the special case of DFT is
known as density-functional perturbation theory (DFPT)
\cite{baroniGreenSfunctionApproach1987,
  gonzeAdiabaticDensityfunctionalPerturbation1995,
  gonzePerturbationExpansionVariational1995,
  gonzeDensityfunctionalApproachNonlinearresponse1989}. See also
\cite{normanPrinciplesPracticesMolecular2018} for applications to quantum
chemistry, \cite{baroniPhononsRelatedCrystal2001} for applications to phonon
calculations, and
\cite{cancesMathematicalPerspectiveDensity2014} for a mathematical analysis of
DFPT within the reduced Hartree-Fock (rHF) approximation (also called the Hartree approximation in the physics literature).
Although the practical implementation of first- and higher-order derivatives
computed by DFPT in electronic structure calculation software can be greatly
simplified by Automatic Differentiation techniques
\cite{griewankEvaluatingDerivativesPrinciples2008}, the efficiency of the
resulting code crucially depends on the efficiency of a key building block: the
computation of the linear response $\delta\rho$ of the GS density to an
infinitesimal variation $\delta V$ of the total Kohn-Sham potential.

\medskip

For reasons that will be detailed below, the numerical evaluation of the linear
map $\delta V \mapsto \delta \rho$ is not straightforward, especially
for periodic
metallic systems. Indeed, DFT calculations for metallic systems usually require
the introduction of a smearing temperature $T$, a numerical parameter which has
nothing to do with the physical temperature (in practice, its value is often higher than the
melting temperature of the crystal). For the sake of simplicity, let us first
consider the case of a periodic simulation cell $\Omega$ containing an even
number $N_{\rm el}$ of electrons in a spin-unpolarized state (see
\autoref{rmk:Brillouin} for details on how this formalism allows
for the computation of
properties of perfect crystals).
The Kohn-Sham GS at finite temperature $T>0$ is then described by an
$\L^2(\Omega)$-orthonormal set of orbitals $(\phi_n)_{n\in\N^*}$ with
energies $(\varepsilon_n)_{n\in\N^*}$, which are the eigenmodes of the Kohn-Sham
Hamiltonian $H$ associated with the GS density:
$$
H \phi_n = \varepsilon_n \phi_n, \qquad \int_\Omega \phi_m^*(\bm r) \phi_n(\bm
r) \d\bm r=\delta_{mn}, \qquad  \varepsilon_1 \lq \varepsilon_2 \lq
\varepsilon_3 \lq \cdots,
$$
together with periodic boundary conditions.
The GS density in turn reads
\begin{equation}\label{eq:rho_intro}
  \rho(\bm r) = \sum_{n=1}^{+\infty} f_n\abs{\phi_n(\bm r)}^2
  \quad\text{with}\quad f_n \coloneqq f\prt{\frac{\varepsilon_n-\eF}T},
\end{equation}
where $f$ is a smooth occupation function converging to $2$ at $-\infty$ and to
zero at $+\infty$, \eg~the Fermi-Dirac smearing function
$f(x)=\frac{2}{1+\e^x}$ (see \autoref{fig:Fermi-Dirac}). The Fermi level $\eF$ is the Lagrange multiplier of the
neutrality charge constraint: it is the unique real number such that
$$
\int_\Omega \rho(\bm r) \d\bm r = \sum_{n=1}^{+\infty} f_n =
\sum_{n=1}^{+\infty} f\prt{\frac{\varepsilon_n-\eF}T}=N_{\rm el}.
$$
It follows from perturbation theory that the linear response $\drho$ of the
density to an infinitesimal variation $\delta V$ of the {\em total} Kohn-Sham
potential is given by
$$
\delta\rho = \chi_0 \delta V,
$$
where $\chi_0$ is the independent-particle susceptibility operator (also called noninteracting
density response function). Equivalently, this operator describes the linear response of a system
of {\em noninteracting} electrons of density $\rho$ subject to an infinitesimal perturbation $\dH$.
It holds (see \autoref{sec:response})
\begin{equation}\label{eq:reponse_intro}
  \drho(\bm r) \coloneqq
  (\chi_0\dH)(\bm{r}) = \sum_{n=1}^{+\infty} \sum_{m=1}^{+\infty}
  \frac{f_{n}-f_{m}}{\varepsilon_{n}-\varepsilon_{m}} \phi_n^*(\bm{r})\phi_m(\bm{r})
  \prt{\dH_{mn} - \deF\delta_{mn}},
\end{equation}
where $\dH_{mn} \coloneqq \cro{\phi_m,\dH\phi_n}$, $\deF$ is the induced variation of
the Fermi level $\eF$, and $\delta_{mn}$ is the Kronecker symbol. We also use
the convention
$(f_n-f_n)/(\varepsilon_n-\varepsilon_n) =
\frac1Tf'\prt{\frac{\varepsilon_n-\eF}T}$.

\medskip

In practice, these equations are discretized on a finite basis set, so
that the sums in \eqref{eq:rho_intro} and \eqref{eq:reponse_intro}
become finite. Since the number of basis functions
$N_{\rm b}$ is often very large compared to the number of electrons in
the system, it is very expensive to compute the sums as such. However,
in practice it is
possible to restrict to the computation of a number $N \ll N_{\rm b}$
of orbitals. These orbitals are then computed using efficient iterative methods
\cite{payne1992iterative}.

\begin{figure}
  \centering
  \medskip
  \hfill\includegraphics[width=0.33\linewidth]{./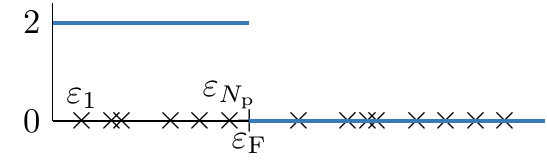}\hfill
  \includegraphics[width=0.33\linewidth]{./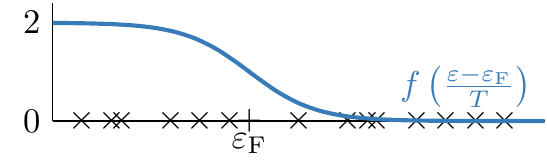}\hfill{~}
  \caption{The occupation numbers $f_n$ for $T=0$ (left) and $T>0$ (right).}
  \label{fig:Fermi-Dirac}
\end{figure}

\medskip

For insulating systems, there is a (possibly) large band gap between
$\varepsilon_{N_{\rm p}}$ and $\varepsilon_{N_{\rm p}+1}$ which
remains non-zero in the thermodynamic limit of a growing simulation
cell. As a result, the calculation can be done at zero temperature,
such that the occupation function $f$ becomes a step function (see
\autoref{fig:Fermi-Dirac}). The jump from $2$ to $0$ in the occupations
occurs exactly when the lowest $N_{\rm p}=N_{\rm el}/2$ energy levels
$\varepsilon_1 \lq \cdots \lq \varepsilon_{N_{\rm p}}$ are occupied
with an electron pair (two electrons of opposite spin). Thus, $f_n=2$
for $1 \lq n \lq N_{\rm p}$ and $f_n=0$ for $n > N_{\rm p}$. As a
result, $N$ can be chosen equal to the number of electron pairs
$N_{\rm p}$ without any approximation. In contrast, for metallic
systems $\varepsilon_{N_{\rm p}} = \varepsilon_{N_{\rm p}+1} = \eF$ in
the zero-temperature thermodynamic limit (more precisely there is a positive density
of states at the Fermi level in the limit of an infinite simulation
cell), causing the denominators in the right-hand side of formula \eqref{eq:reponse_intro}
to formally blow up. Calculations on metallic systems are thus done at
finite temperature $T>0$, in which case every orbital has a fractional
occupancy $f_n\in(0,2)$. However, since from a classical semiclassical approximation $\varepsilon_n$ tends to
infinity as $n^{2/3}$ as $n \to \infty$, and $f$ decays very quickly, one can safely assume that
only a finite number $N$ of orbitals have nonnegligible occupancies.
This allows one to avoid computing $\phi_n$ for $n>N$.
Under this approximation, a formal differentiation of
\eqref{eq:rho_intro} gives
\begin{equation}\label{eq:drho_intro}
  \drho(\bm r) = \sum_{n=1}^N f_n(\phi_n^*(\bm r)\dphi_n(\bm r) + \dphi_n^*(\bm
  r)\phi_n(\bm r)) + \df_n\abs{\phi_n(\bm r)}^2.
\end{equation}
However, while the response $\drho$ to a given $\delta V$ is well-defined by \eqref{eq:reponse_intro}, the
set $(\dphi_n,\df_n)_{1\lq n\lq N}$ is not. Indeed, the Kohn-Sham energy
functional being in fact a function of the density matrix $\gamma =
\sum_{n=1}^{N} f_n \ket{\phi_n}\bra{\phi_n}$, any transformation
of $(\dphi_n,\df_n)_{1\lq n\lq N}$ leaving invariant the first-order variation
\begin{equation}\label{eq:varDM}
  \delta\gamma \coloneqq \sum_{n=1}^{N} \delta f_n |\phi_n\rangle \langle \phi_n|+
  \sum_{n=1}^{N} f_n \left(  |\phi_n\rangle \langle \dphi_n|+ |\dphi_n\rangle
    \langle \phi_n| \right)
\end{equation}
of the density matrix is admissible. This gauge freedom can be used to
stabilize linear response calculations or, in the contrary, may lead to
numerical instabilities. Denote by $P$ the orthogonal projector onto
$\Span(\phi_n)_{1\lq n\lq N}$, the space spanned by the orbitals considered as
(partially) occupied, and by
$Q=1-P$ the orthogonal projector onto the space $\Span(\phi_n)_{n > N}$ spanned
by the orbitals considered as unoccupied. Then, the linear response of any
occupied orbital can be decomposed as $\dphi_n=\dphi_n^P +\dphi_n^Q$ where:
\begin{itemize}
  \item $\dphi_n^P = P\dphi_n\in\Ran(P)$ can be directly computed \emph{via} a
    sum-over-state formula (explicit decomposition on the basis of
    $(\phi_n)_{n \lq N}$). This contribution can be chosen to vanish in the
    zero-temperature limit, as in that case
    $P\, \delta\gamma\, P = 0$. At finite temperature,
    a gauge choice has to be made and several options have been
    proposed in the literature;
  \item $\dphi_n^Q = Q\dphi_n\in\Ran(Q)$ is the unique solution of the
    so-called Sternheimer equation
    \cite{sternheimerElectronicPolarizabilitiesIons1954}
    \begin{equation}\label{eq:Q_intro}
      Q(H-\varepsilon_n)Q\dphi_n^Q = -Q\dH\phi_n,
    \end{equation}
    where $H$ is the Kohn-Sham Hamiltonian of the system.
    This equation is possibly very ill-conditioned for $n=N$ if $\varepsilon_{N+1} -
    \varepsilon_N$ is very small.
\end{itemize}

\medskip

This paper addresses these two issues. First, we review and analyse the different gauge choices for
$\dphi_n^P$ proposed in the literature and introduce a new one. We
bring all these various gauge choices together in a new common framework and
analyse their performance in terms of numerical stability.
Second, for the contribution $\dphi_n^Q$, we investigate how to improve the
conditioning of the linear system $\eqref{eq:Q_intro}$, which is usually solved
with iterative solvers and we propose a new approach.
This new approach is based on the fact that, as a byproduct of the
iterative computation of the ground state orbitals
$(\phi_{n})_{n \lq N}$, one usually obtains relatively good
approximations of the following eigenvectors. This information is
often discarded for response calculations; we use them in a Schur
complement approach to improve the conditioning of the iterative solve
of the Sternheimer equation. We quantify the improvement of the
conditioning obtained by this new approach and illustrate its efficiency on
several metallic systems, from aluminium to transition metal alloys. We observe
a reduction of typically 40\% of the number of Hamiltonian applications (the
most costly step of the calculation). The numerical tests have been performed
with the DFTK
software \cite{herbstDFTKJulianApproach2021}, a recently developed plane-wave DFT
package in
\texttt{Julia}  allowing for both easy implementation of novel algorithms and
numerical simulations of challenging systems of physical interest.
The improvements suggested in this work are now the default choice in DFTK
to solve response problems.

\medskip

This paper is organized as follows. In \autoref{sec:math}, we review the
periodic KS-DFT equations and the associated approximations.  We also
present the mathematical formulation of DFPT and we
detail the links between the orbitals' response $\dphi_n$ and the ground-state
density response $\drho$ for a given external perturbation, as well as
the derivation of the Sternheimer equation \eqref{eq:Q_intro}.
In \autoref{sec:response}, we propose
a common framework for different natural gauge choices. Then, with
focus on the Sternheimer equation and the Schur complement,
we present the improved resolution to obtain $\dphi_n^Q$. Finally, in
\autoref{sec:num}, we perform numerical simulations on relevant physical
systems.  In the appendix, we propose a strategy for choosing
the number of extra orbitals motivated by a rough convergence analysis
of the Sternheimer equation.

\section{Mathematical framework}\label{sec:math}

\subsection{Periodic Kohn-Sham equations}

We consider here a simulation cell $\Omega=[0,1) \bm a_1 + [0,1) \bm a_2 +
[0,1) \bm a_3$ with periodic boundary conditions, where $(\bm a_1,\bm a_2, \bm
a_3)$ is a nonnecessarily orthonormal basis of
$\R^3$.
We denote by $\Rc=\Z \bm a_1 + \Z \bm a_2 + \Z \bm a_3$ the periodic lattice in
the position space and by $\Rc^*~=~\Z \bm b_1 + \Z \bm b_2 + \Z \bm b_3$ with
$\bm a_i\cdot \bm b_j = 2\pi\delta_{ij}$ the reciprocal lattice.    Let us
denote by
\begin{equation}
  \L^2_\#(\R^3, \C) \coloneqq \{u \in \L^2_{\rm loc}(\R^3,\C) \; | \; u \; \text{ is
  } \Rc\mbox{-periodic} \}
\end{equation}
the Hilbert space of complex-valued $\Rc$-periodic locally square integrable
functions on $\R^3$, endowed with its usual inner product $\cro{\cdot,\cdot}$ and by
$\H^s_\#(\R^3,\C)$ the $\Rc$-periodic Sobolev space of order $s\in\R$
$$
\H^s_\#(\R^3,\C)\coloneqq\left\{ u = \sum_{\bm G \in \Rc^*} \widehat u_{\bm G} e_{\bm G}, \; \sum_{\bm G \in \Rc^*} (1+|\bm G|^2)^s |\widehat u_{\bm G}|^2 < \infty\right\}
$$
where $e_{\bm G}(\bm r) = \e^{\i\bm G\cdot \bm r}/\sqrt{\abs{\Omega}}$
is the Fourier mode with wave-vector $\bm G$.

\medskip

In atomic units, the KS equations for a system of $\Nel=2N_{\rm p}$ spin-unpolarized electrons at
finite temperature read
\begin{equation}\label{eq:KS}
  H_\rho \phi_n = \varepsilon_n \phi_n, \quad
  \varepsilon_1\lq \varepsilon_2\lq\cdots,\quad
  \cro{\phi_n,\phi_m} = \delta_{nm}, \quad \rho(\bm r)
  = \sum_{n=1}^{+\infty} f_n\abs{\phi_n(\bm r)}^2, \quad
  \sum_{n=1}^{+\infty} f_n = \Nel,
\end{equation}
where $H_\rho$ is the Kohn-Sham Hamiltonian. It is given by
\begin{equation}
  H_\rho = - \frac 1 2 \Delta + V + V_{\rho}^{\rm Hxc}
\end{equation}
where $V$ is the potential generated by the nuclei (or the ionic cores if
pseudopotentials are used) of the system, and
$
V_{\rho}^{\rm Hxc}(\bm r) = V^{\rm H}_\rho(\bm r) + V^{\rm xc}_\rho(\bm r)
$
is an $\Rc$-periodic real-valued function depending on $\rho$. The Hartree
potential $V^{{\rm H}}_\rho$ is the unique zero-mean solution to the
periodic Poisson equation
$- \Delta V^{{\rm H}}_\rho(\bm r) = 4\pi \left( \rho(\bm r) -
  \frac{1}{\abs{\Omega}}\int_\Omega \rho \right)$
and the function $V^{{\rm xc}}_\rho$ is the exchange-correlation
potential. $H_\rho$ is a self-adjoint operator on
$\L^2_\#(\R^3,\C)$, bounded below and with compact resolvent. Its spectrum is
therefore composed of a nondecreasing sequence of eigenvalues
$(\varepsilon_n)_{n\in\N^*}$ converging to $+\infty$.
Since $H_\rho$ depends on the electronic density $\rho$, which in turn depends
on the eigenfunctions $\phi_n$, \eqref{eq:KS} is a nonlinear eigenproblem,
usually solved with \emph{self-consistent field} (SCF) algorithms. These
algorithms are based on successive partial diagonalizations of the Hamiltonian $H_{\rho_n}$ built from the current iterate $\rho_n$.
See \cite{cancesConvergenceAnalysisDirect2021,
  cancesNumericalMethodsKohnSham2021,linMathematicalIntroductionElectronic2019}
and references therein for a mathematical presentation of SCF algorithms.

\medskip

In \eqref{eq:KS}, the $\phi_n$'s are the
Kohn-Sham orbitals, with energy $\varepsilon_n$ and occupation number $f_n$.
At finite temperature $T>0$, $f_n$ is a real number in the interval $[0,2)$ and
we have
\begin{equation}\label{eq:fn}
  f_n = f\prt{\frac{\varepsilon_n - \eF}{T}},
\end{equation}
where $f$ is a fixed analytic \emph{smearing} function, which we choose here equal to
twice the Fermi-Dirac function: $f(x) = 2 / (1 + \e^x )$.
The Fermi level $\eF$ is then uniquely defined by the charge constraint
\begin{equation}\label{eq:eF}
  \sum_{n=1}^{+\infty} f_n = \Nel.
\end{equation}
When $T\to0$,
$f\prt{(\cdot - \eF)/T} \to 2\times\bm1_{\set{\cdot<\eF}}$ almost everywhere, and only the lowest $\Np=\Nel/2$ energy levels for which
$\varepsilon_n<\eF$ are occupied by two electrons of opposite spins (see
\autoref{fig:Fermi-Dirac}): $f_n=2$ for $n\lq \Np$ and $f_n=0$ for $n>\Np$.

\begin{remark}[The case of perfect crystals]\label{rmk:Brillouin}
Using a finite simulation cell $\Omega$ with periodic boundary conditions is
usually the best way to compute the bulk properties of a material in the
condensed phase. Indeed, KS-DFT simulations are limited to, say $10^3-10^4$
electrons, on currently available standard computer architectures. Simulating
{\it in vacuo} a small sample of the material containing, say $10^3$ atoms,
would lead to completely wrong results, polluted by surface effects since about
half of the atoms would lay on the sample surface. Periodic boundary conditions are a
trick to get rid of surface effects, at the price of artificial interactions
between the sample and its periodic image. In the case of a perfect crystal with
Bravais lattice ${\mathbb L}$ and unit cell $\omega$, it is natural to choose a
periodic simulation (super)cell $\Omega =L \omega$ consisting of $L^3$ unit cells (we
then have $\Rc=L\mathbb L$). In the absence of spontaneous symmetry breaking,
the KS ground-state density has the same $\mathbb L$-translational invariance as the
nuclear potential. Using Bloch theory, the supercell eigenstates $\phi_{n}$ can then be relabelled as
$\phi_{n}(\bm r) = e^{i\bm k \cdot \bm r} u_{j\bm k}(\bm r)$, where
$u_{j\bm k}$ now has cell periodicity, and equations \eqref{eq:KS}--\eqref{eq:fn} can be rewritten
as
\begin{align}
  &   H_{\rho,\bm k} u_{j\bm k} = \varepsilon_{j\bm k} u_{j\bm k}, \quad
  \varepsilon_{1\bm k}\lq \varepsilon_{2\bm k}\lq\cdots,\quad
  \cro{u_{j\bm k},u_{j'\bm k}} = \delta_{jj'}, \\
  &\rho(\bm r)
  = \frac{1}{L^3} \sum_{\bm k \in {\mathcal G}_L} \sum_{j=1}^{+\infty} f_{j\bm
    k}\abs{u_{j\bm k}(\bm r)}^2, \quad
 \frac{1}{L^3} \sum_{\bm k \in {\mathcal G}_L} \sum_{j=1}^{+\infty} f_{j\bm k} = \Nel, \quad
 f_{j\bm k} = f\prt{\frac{\varepsilon_{j\bm k} - \eF}{T}} \\
 & H_{\rho, \bm k} = \frac12(-\i\nabla + \bm k)^2 + V + V^{{\rm Hxc}}_\rho,
\end{align}
where ${\mathcal G}_L=L^{-1} {\mathbb L}^* \cap \omega^*$. Here ${\mathbb L}^*$
is the dual lattice of $\mathbb L$ and $\omega = \R^3 / {\mathbb L}^*$ the first
Brillouin zone of the crystal. In the thermodynamic limit $L \to \infty$, we
obtain the periodic Kohn-Sham equations at finite temperature
\begin{align}
  &   H_{\rho,\bm k} u_{j\bm k} = \varepsilon_{j\bm k} u_{j\bm k}, \quad
  \varepsilon_{1\bm k}\lq \varepsilon_{2\bm k}\lq\cdots,\quad
  \cro{u_{j\bm k},u_{j'\bm k}} = \delta_{jj'}, \\
  &\rho(\bm r)
  = \fint_{\omega^*} \sum_{j=1}^{+\infty} f_{j\bm k}\abs{u_{j\bm k}(\bm
    r)}^2 \d\bm k,
  \quad
 \fint_{\omega^*} \sum_{j=1}^{+\infty} f_{j\bm k} \d \bm k = \Nel, \quad
 f_{j\bm k} = f\prt{\frac{\varepsilon_{j\bm k} - \eF}{T}}. 
\end{align}
This is a massive reduction in complexity, as now only computations on
the unit cell have to be performed. For metals, the integrand on the Brillouin zone is discontinuous at zero
temperature, which makes standard quadrature methods fail. Introducing a
smearing temperature $T > 0$ allows one to smooth out the integrand, see
\cite{cancesNumericalQuadratureBrillouin2020,
  levittScreeningFiniteTemperatureReduced2020} for a numerical analysis of the
smearing technique. We also refer for instance to \cite[Section
XIII.16]{reedAnalysisOperators1978} for more details on Bloch theory, to
\cite{cancesNewApproachModeling2008,cattoMathematicalTheoryThermodynamic1998}
for a proof of the thermodynamic limit for perfect crystals in the rHF setting
for both insulators and metals, and to \cite{gontierConvergenceRatesSupercell2016} for
the numerical analysis for insulators.
\end{remark}

\subsection{Density-functional perturbation theory}

We detail in this section the mathematical framework of DFPT. We first rewrite the Kohn-Sham equations \eqref{eq:KS} as
the fixed-point equation for the density $\rho$
\begin{equation}\label{eq:ptv}
  F\prt{V+V^{\rm Hxc}_\rho} = \rho,
\end{equation}
where $F$ is the potential-to-density mapping defined by
\begin{equation}
  F(V)(\bm r) =
  \sum_{n=1}^{+\infty} f\prt{\frac{\varepsilon_n - \eF}T}\abs{\phi_n(\bm r)}^2
\end{equation}
with $(\varepsilon_n,\phi_n)_{n\in\N^*}$ an orthonormal basis of eigenmodes
of $-\frac12\Delta + V$ and $\eF$ defined by \eqref{eq:eF}. The
solution of \eqref{eq:ptv} defines a mapping from $V$ to $\rho$: the
purpose of DFPT is to compute its derivative. Let $\dH_0$ be a
local infinitesimal perturbation, in the sense that it can be represented by a multiplication
operator by a periodic function $\bm r \mapsto \delta V_0(\bm r)$.
By taking the
derivative of \eqref{eq:ptv} with the chain rule, we obtain the implicit
equation for $\drho$:
\begin{equation}
  \drho = F'\prt{V + V^{\rm Hxc}_\rho}\cdot(\dH_0 + K^{\rm
    Hxc}_\rho\drho),
\end{equation}
where the Hartree-exchange-correlation kernel $K^{\rm Hxc}_\rho$ is the
derivative of the map $\rho\mapsto V^{\rm Hxc}_\rho$ and
$F'\prt{V + V^{\rm Hxc}_\rho}$ is the derivative of $F$ computed at
$V + V^{\rm Hxc}_\rho$. In the field of DFT calculations, the latter operator is known
as the \emph{independent-particle susceptibility} operator and is denoted by $\chi_0$.
This yields the Dyson equation
\begin{equation}\label{eq:DFPT_tot}
  \drho = \chi_0(\dH_0 + K^{\rm Hxc}_\rho\drho)
  \quad\Leftrightarrow\quad
  \drho = \prt{1 - \chi_0K^{\rm Hxc}_\rho}^{-1}\chi_0\dH_0.
\end{equation}
This equation is commonly solved by iterative methods, which require efficient and robust
computations of $\chi_0\dH$ for various right-hand sides $\dH$'s. In the rest of this
article, we forget about the solution of \eqref{eq:DFPT_tot} and
focus on the computation of the noninteracting response $\drho \coloneqq \chi_0\dH$ for
a given $\dH$.

\medskip

The operator $\chi_0$ maps $\dH$ to the first-order variation $\drho$
of the ground-state density of a noninteracting system of electrons ($K^{\rm Hxc} = 0$).
Denoting $A_{mn} \coloneqq \cro{\phi_m,A\phi_n}$ for a
given operator $A$, it holds
\begin{equation}\label{eq:reponse}
  \drho(\bm{r}) = \sum_{n=1}^{+\infty} \sum_{m=1}^{+\infty}
  \frac{f_{n}-f_{m}}{\varepsilon_{n}-\varepsilon_{m}} \phi_n^*(\bm{r})\phi_m(\bm{r})
  \prt{\dH_{mn} - \deF\delta_{mn}},
\end{equation}
where $\delta_{mn}$ is the Kronecker delta, $\deF$ is the induced variation in
the Fermi level and we use the following convention
\begin{equation}\label{eq:conv}
  \frac{f_n - f_n}{\varepsilon_n - \varepsilon_n} =
  \frac{1}{T}f'\prt{\frac{\varepsilon_n - \eF}{T}} \eqqcolon f'_n.
\end{equation}
Charge conservation leads to
\begin{equation}\label{eq:deF}
  \int_\Omega \drho(\bm r)\d\bm r = 0 \quad\Rightarrow\quad
  \deF = \frac{\sum_{n=1}^{+\infty}
    f'_n\delta\varepsilon_n}{\sum_{n=1}^{+\infty}f'_n},
\end{equation}
where $\delta\varepsilon_n \coloneqq \dH_{nn}$. We refer to
\cite{baroniPhononsRelatedCrystal2001} for a physical discussion of
this formula, and to \cite{cancesDielectricPermittivityCrystals2010,
  herbstBlackboxInhomogeneousPreconditioning2020,
  levittScreeningFiniteTemperatureReduced2020},
  where it is proven rigorously using contour integrals.

\begin{remark}
  Similar to the discussion above on the computation of perfect
  crystal employing Bloch theory, response computations of perfect
  crystals can be performed by decomposing $\delta V_{0}$ in its Bloch
  modes. This allows for the efficient computation of phonon spectral
  or dielectric functions for instance.
\end{remark}
\begin{remark}
  We restricted our discussion for simplicity to local potentials, but
  the formalism can easily be extended to
  nonlocal perturbations (such as the ones created by pseudopotentials
  in the Kleinman-Bylander form
  \cite{kleinmanEfficaciousFormModel1982}).
\end{remark}

\subsection{Planewave discretization and numerical resolution}
\label{sec:PW_discr_DFPT}

In this paper we are interested in plane-wave DFT calculations of
metallic systems. This corresponds to a specific Galerkin approximation of the
Kohn-Sham model using as variational approximation space
\begin{equation}\label{eq:XN}
  X_\Nb \coloneqq \Span\set{e_{\bm G},\ \bm G\in\Rc^*,\ \frac12\abs{\bm G}^2 \lq \Ecut},
\end{equation}
where $\Nb$ denotes the dimension of the
discretization space, linked to the cut-off energy $\Ecut$.
Denoting by $\Pi_\Nb$ the orthogonal projection onto $X_\Nb$ for the $\L^2_\#$ inner product,
we then solve the discrete problem: find
$\phi_1,\dots,\phi_\Nb\in X_\Nb$ such that
\begin{equation}
  \label{eq:KS_discr}
  \begin{cases}
    \Pi_\Nb H_{\rho} \Pi_\Nb \phi_n = \varepsilon_n\phi_n,
    \quad \varepsilon_{1} \lq \cdots \lq \varepsilon_{\Nb},\\
    \rho = \sum_{n=1}^\Nb f_n\abs{\phi_n}^2,\quad \sum_{n=1}^\Nb f_n =
    \Nel,\quad f_n = f\prt{\frac{\varepsilon_n - \eF}T},\\
    \cro{\phi_n, \phi_{m}} = \delta_{nm}, \quad
    n,m=1,\dots,\Nb,
  \end{cases}
\end{equation}
where $H_\rho$ is the Kohn-Sham Hamiltonian (or one of its Bloch fibers).
This discretization method for
Kohn-Sham equations has been analysed for instance in
\cite{cancesNumericalAnalysisPlanewave2012}.

\medskip

We emphasize again the point
that not all $\Nb$ eigenpairs need to be computed.
At zero temperature, only the $N=\Nel/2$ lowest energy Kohn-Sham orbitals need to be
fully converged as they are the only occupied ones.
At finite temperature, the number of bands with meaningful occupation numbers
is usually higher than the number of electrons, but the fast
decay of the occupation numbers allows to avoid computing all $\Nb$ eigenpairs.
Determining the number of bands to compute is not easy as, at
finite temperature, we do not know \emph{a priori} the number of bands that are
significantly occupied. A standard heuristic is to fully converge 20\% more
bands than the number of electrons pairs during the SCF.
For the response calculation we then select the number $N$ of bands that
have occupation numbers above some numerical threshold.
On top of these bands,
it is common in DFT calculations to add additional bands that are not fully
converged by the successive eigensolvers. The main advantages of introducing
these bands are: (i) they enhance the diagonalization procedure by increasing
the gap between converged and uncomputed bands and (ii) adding extra bands is
not very expensive when the diagonalization is performed with block-based
methods, such as the LOBPCG algorithm
\cite{knyazevOptimalPreconditionedEigensolver2001}.

\section{Computing the response}\label{sec:response}

\subsection{Practical implementation}

Using \eqref{eq:reponse} as it stands is not possible because of the large
sums. One possibility is to represent
$\drho$ through a collection of occupied orbital variations $(\dphi_n)_{1\lq n\lq N}$ and
occupation number variations $(\df_n)_{1\lq n\lq N}$. One then has to make
appropriate ansatz and gauge choices on the links between $\drho$ and its
representation. Differentiating the formula $\rho(\bm
r) = \sum_{n=1}^N f_n\abs{\phi_n(\bm r)}^2$, one gets
\begin{equation}\label{eq:drho_ansatz}
  \drho(\bm r) = \sum_{n=1}^{N} f_n
  \prt{\phi_n^*(\bm r)\dphi_n(\bm r) + \dphi_n^*(\bm r)\phi_n(\bm r)}
  + \df_n \abs{\phi_n(\bm r)}^2.
\end{equation}
Then, for $n\lq N$, we expand $f_n\dphi_n$ into the basis $(\phi_m)_{m\in\N}$.
Defining
\begin{align}
  \Gamma_{mn} \coloneqq \cro{\phi_m,f_n\dphi_n},
\end{align}
yields
\begin{equation}\label{eq:dphi}
  \Forall 1\lq n\lq N,\quad
  f_n\dphi_n = \sum_{m=1}^N \Gamma_{mn}\phi_m + f_n\dphi_n^Q
\end{equation}
where $\dphi_n^Q \coloneqq Q\dphi_n$ and $Q$ is the orthogonal projector onto
$\Span(\phi_m)_{N<m}$, the space spanned by the unoccupied orbitals. Plugging
\eqref{eq:dphi} into \eqref{eq:drho_ansatz}, we obtain, using symmetry between
$n$ and $m$,
\begin{equation}\label{eq:drho_ansatz2}
  \drho(\bm r) = \sum_{n,m=1}^N \phi_n^*(\bm r)\phi_m(\bm r)\prt{\Gamma_{mn} +
    \wb{\Gamma_{nm}}} + \sum_{n=1}^N \df_n\abs{\phi_n(\bm r)}^2
  + \sum_{n=1}^N 2f_n\Re\prt{\phi_n^*(\bm r) \dphi_n^Q(\bm r)}.
\end{equation}
A first gauge choice can be made here. Using again the charge conservation, we get
\begin{equation}
  0 = \int_\Omega \drho(\bm r)\d\bm r \quad\Rightarrow\quad
  0 = \sum_{n=1}^N \Re(\Gamma_{nn}) + \df_n.
\end{equation}
Given that we adapt $\df_n$ accordingly
we can thus assume $\Gamma_{nn}=0$ for any $1\lq n\lq N$.
We will make this gauge choice from this point,
leaving the constraint $\sum_{n=1}^N\df_n=0$
to restrict possible choices of $\df_n$.

\medskip

We now derive conditions on $(\Gamma_{mn})_{1\lq n,m\lq N}$,
$(\df_n)_{1\lq n\lq N}$ and $(\dphi_n^Q)_{1\lq n\lq N}$ so that
the ansatz we made is a valid representation of $\drho$, that is to say
\eqref{eq:drho_ansatz2} coincides with \eqref{eq:reponse}. To this end, we
rewrite \eqref{eq:reponse} as
\begin{equation}\label{eq:reponse2}
  \drho(\bm{r}) = \sum_{n,m=1}^{N}
  \frac{f_{n}-f_{m}}{\varepsilon_{n}-\varepsilon_{m}} \phi_n^*(\bm{r})\phi_m(\bm{r})
  \prt{\dH_{mn} - \deF\delta_{mn}}
  + \sum_{n=1}^N \sum_{m=N+1}^{+\infty} 2\frac{f_n}{\varepsilon_n-\varepsilon_m}
  \Re\prt{\phi_n^*(\bm r)\phi_m(\bm r)\dH_{mn}},
\end{equation}
where the terms $f_n$, $f_m$ for which $n,m>N+1$ have been neglected because of their small
occupation numbers and we used the symmetry between $n$ and $m$ for the terms
with $1\lq n\lq N$, $m>N$. From a term by term comparison between
\eqref{eq:drho_ansatz2} and \eqref{eq:reponse2},
we infer first from the $n=m$ term and the
gauge choice $\Gamma_{nn} = 0$ that $\df_n = f'_n\prt{\dH_{nn} - \deF} =
f'_n\prt{\delta\varepsilon_n - \deF} $. Note
that, thanks to the definition \eqref{eq:deF} of $\deF$, charge conservation is
indeed satisfied. Next, for the first sum to coincide between
\eqref{eq:drho_ansatz2} and \eqref{eq:reponse2}, we see that
the $\Gamma_{mn}$'s have to satisfy
\begin{equation}\label{eq:gamma}
  \Forall 1\lq n, m\lq N,m \neq n,\ \quad \Gamma_{mn} + \wb{\Gamma_{nm}} =
  \frac{f_{n}-f_{m}}{\varepsilon_{n}-\varepsilon_{m}} \dH_{mn} \eqqcolon
  \Delta_{mn}.
\end{equation}
Finally, since $\dphi_n^Q\in\Span(\phi_m)_{N<m}$, we deduce from the last sum
in \eqref{eq:drho_ansatz2} and \eqref{eq:reponse2}
that $\dphi_n^Q$ can be computed as the unique solution of the linear system
\begin{equation}\label{eq:sternheimer}
  \Forall 1\lq n\lq N,\quad Q(H_\rho - \varepsilon_n)Q\dphi_n^Q = -Q\dH\phi_n,
\end{equation}
sometimes known in DFT as the Sternheimer equation
\cite{sternheimerElectronicPolarizabilitiesIons1954}.
Note that $\dphi_N^{Q}$ can be arbitrarily large,
since $\varepsilon_{N+1} - \varepsilon_{N}$ may be arbitrarily small.
However, this does not pose a problem in practice
as $\dphi_N^{Q}$ is multiplied by $f_{N}$ (cf. \eqref{eq:drho_ansatz2}),
which is very small.

\medskip

To summarize, the response $\drho = \chi_0\dH$ can be computed as
\begin{equation}\label{eq:drhoN}
  \drho(\bm r) = \sum_{n=1}^{N} 2f_n
  \Re\prt{\phi_n^*(\bm r)\dphi_n(\bm r)} + \df_n \abs{\phi_n(\bm r)}^2.
\end{equation}
Here, $\df_n = f'_n\prt{\delta\varepsilon_n - \deF}$, and $\dphi_n$ is separated into
two contributions:
\begin{equation}
  \Forall 1\lq n\lq N,\quad \dphi_n = \dphi_n^P + \dphi_n^Q,
\end{equation}
where $(\dphi_n^P,\dphi_n^Q)\in\Ran(P)\times\Ran(Q)$ with $P$ the orthogonal projector onto
$\Span(\phi_m)_{1\lq m\lq N}$ and $Q=1-P$.
These two contributions are computed as follows:
\begin{itemize}
  \item $\dphi_n^P$ is computed \emph{via} a sum-over-states $m\neq n$:
    \begin{equation}
      \dphi_n^P = \sum_{m=1,m \neq n}^N \Gamma_{mn}\phi_m,
    \end{equation}
    where the $\Gamma_{mn}$'s satisfy $\Gamma_{mn} + \wb{\Gamma_{nm}} =
    \Delta_{mn}$.
    An additional gauge choice has to be
    made as these constraints do not yet define $\Gamma_{nm}$ uniquely.
    We refer to this term as the occupied-occupied contribution.
  \item $\dphi_n^Q$ is obtained as the solution of the Sternheimer equation
    \eqref{eq:sternheimer}. However, this linear system is possibly very
    ill-conditioned if $\varepsilon_{N+1} - \varepsilon_N$ is small. We refer to
    this term as the unoccupied-occupied contribution.
\end{itemize}
Note that, at zero temperature, $\dphi_n^P$ vanishes so that $\dphi_n =
\dphi_n^Q \in \Ran(Q)$ and only the Sternheimer equation
\eqref{eq:sternheimer} needs to be solved.
In the next two sections, we detail the practical computation of these two
contributions.

\subsection{Occupied-occupied contributions}
In this section we discuss possible gauge choices for $\Gamma_{mn}$
to obtain a unique solution to \eqref{eq:gamma}.
Throughout this section we assume $m \neq n$ and $\Gamma_{nn} = 0$.

\subsubsection{Orthogonal gauge}
The orthogonal gauge choice is motivated from the zero temperature setting,
where $\dphi_{n}^{P} = 0$ allows to trivially preserve the
orthogonality amongst the computed orbitals $\phi_n$ under the perturbation.
For the case involving temperature, we additionally impose
\begin{equation}
  0 = \delta \cro{\phi_m,\phi_n} = \cro{\phi_m,\dphi_n} + \cro{\dphi_m,\phi_n},
\end{equation}
and therefore
\begin{equation}
  \frac1{f_n}\Gamma_{mn} + \frac1{f_m}\wb{\Gamma_{nm}} = 0,
\end{equation}
yielding
\begin{equation}
  \Gamma_{mn}^{\rm orth} = \frac{f_n}{\varepsilon_n - \varepsilon_m}\dH_{mn}
  \text{ for } m \neq n.
\end{equation}
As a result $f_n \dphi_n$ features a
possibly large contribution $\Gamma_{mn}^{\rm orth}$,
which is going to be almost compensated in \eqref{eq:drhoN} by
the large contribution $\wb{\Gamma_{nm}^{\rm orth}}$ to $f_n \dphi_n^\ast$
due the requirement to sum to the moderate-size contribution
$\Gamma_{mn}^{\rm orth} + \wb{\Gamma_{nm}^{\rm orth}} = \Delta_{mn}$.
This can lead to numerical instabilities because small errors,
\eg~due to the fact that the $\phi_{n}$'s in \eqref{eq:drhoN} are eigenvectors
only up to the solver tolerance, will get amplified by the $\Gamma_{mn}$.
The next gauge choices provide solutions to this issue.

\subsubsection{Simple gauge choice}
Possibly the simplest gauge choice is $\Gamma_{mn}^{\rm simple} =
\frac12\Delta_{mn}$. Since $(\Delta_{mn})_{1\lq n,m\lq N}$ is Hermitian,
\eqref{eq:gamma} is immediately satisfied.

\subsubsection{Quantum Espresso gauge}

The DFPT framework presented in \cite{baroniPhononsRelatedCrystal2001} and
implemented in Quantum Espresso \cite{giannozziQUANTUMESPRESSOModular2009}
suggests choosing
\begin{equation}
  \Gamma_{mn}^{\rm QE} = f_{\rm FD}\prt{\frac{\varepsilon_n - \varepsilon_m}{T}} \Delta_{mn},
\end{equation}
where $f_{\rm FD} = \frac12 f$ is the Fermi-Dirac functional.
Since $f_{\rm FD}(x) + f_{\rm FD}(-x)=1$, we have $\Gamma_{mn}^{\rm QE} +
\wb{\Gamma_{mn}^{\rm QE}}=\Delta_{mn}$.

\subsubsection{Abinit gauge} In the Abinit software
\cite{gonzeAbinitProjectImpact2020,romeroABINITOverviewFocus2020},
the choice is
\begin{equation}
  \Gamma_{mn}^{\rm Ab} = \bm{1}_{\{f_n>f_m\}}\Delta_{mn}.
\end{equation}

\subsubsection{Minimal gauge}
Motivated by our analysis of the instabilities we suggest minimizing $\dphi_n$,
that is to ensure $\Gamma_{mn}/f_n$ to stay as small as possible.
This leads to the minimization problem
\begin{equation}
  \begin{aligned}
      &&\min\hspace{10pt} &\displaystyle \sum_{n,m=1,m \neq n}^{N}
    \frac{1}{f_n^2} \abs{\Gamma_{mn}}^2,\\
    &&\text{s.t.}\hspace{10pt}&\Gamma_{mn} + \wb{\Gamma_{nm}} = \Delta_{mn}, \quad\Forall
    1\lq n,m\lq N,\ m \neq n.
  \end{aligned}
\end{equation}
As the constraint \eqref{eq:gamma}
only couples $(n,m)$ and $(m,n)$, this translates into an uncoupled system of
constrained minimization problems: for $1\lq n,m\lq N$, $m \neq n$, solve
\begin{equation}
  \begin{aligned}
      &&\min\hspace{10pt} &\displaystyle \frac{1}{f_n^2} \abs{\Gamma_{mn}}^2 + \frac{1}{f_m^2} \abs{\wb{\Gamma_{nm}}}^2,\\
    &&\text{s.t.}\hspace{10pt} & \Gamma_{mn} + \wb{\Gamma_{nm}} = \Delta_{mn},
  \end{aligned}
\end{equation}
whose solution is
\begin{equation}\label{eq:min_gauge}
  \Gamma_{mn}^{\rm min} = \frac{f_n^2}{f_n^2 + f_m^2}\Delta_{mn}.
\end{equation}
This gauge choice is implemented by default in the DFTK software
\cite{herbstDFTKJulianApproach2021}. Another gauge choice inspired from this one
would be to directly minimize $\abs{\Gamma_{mn}}^2 + \abs{\wb{\Gamma_{nm}}}^2$
but it can be shown that this leads to the simple gauge choice $\Gamma_{mn}^{\rm simple} =
\frac12\Delta_{mn}$.

\subsubsection{Comparison of gauge choices}

From \eqref{eq:reponse} we can see that the growth of $\drho$ with respect to $\dH$
can not be higher than the growth of $\Delta_{mn}$ with respect to $\dH$.
The latter is of the order of $\max_{x\in\R} \frac{1}{T}\abs{f'(x)} = \frac{1}{2T}$,
which thus provides an intrinsic limit to the conditioning of the problem.
For all gauge choices but the orthogonal one easily verifies
\begin{equation}
  \abs{\Gamma_{mn}} \lq \abs{\Delta_{mn}} \lq
  \max_{x\in\R} \frac 1 T \abs{f'(x)} \abs{\dH_{mn}} = \frac{1}{2T} \abs{\dH_{mn}}.
\end{equation}
If we make an error on $\dH$ it is thus at most amplified
by a factor of $\frac{1}{2T}$.
All choices but the orthogonal one thus manage to stay within
the intrinsic conditioning limit, see \autoref{fig:comp_gauges}.
\begin{figure}
  \includegraphics[height=0.4\linewidth]{./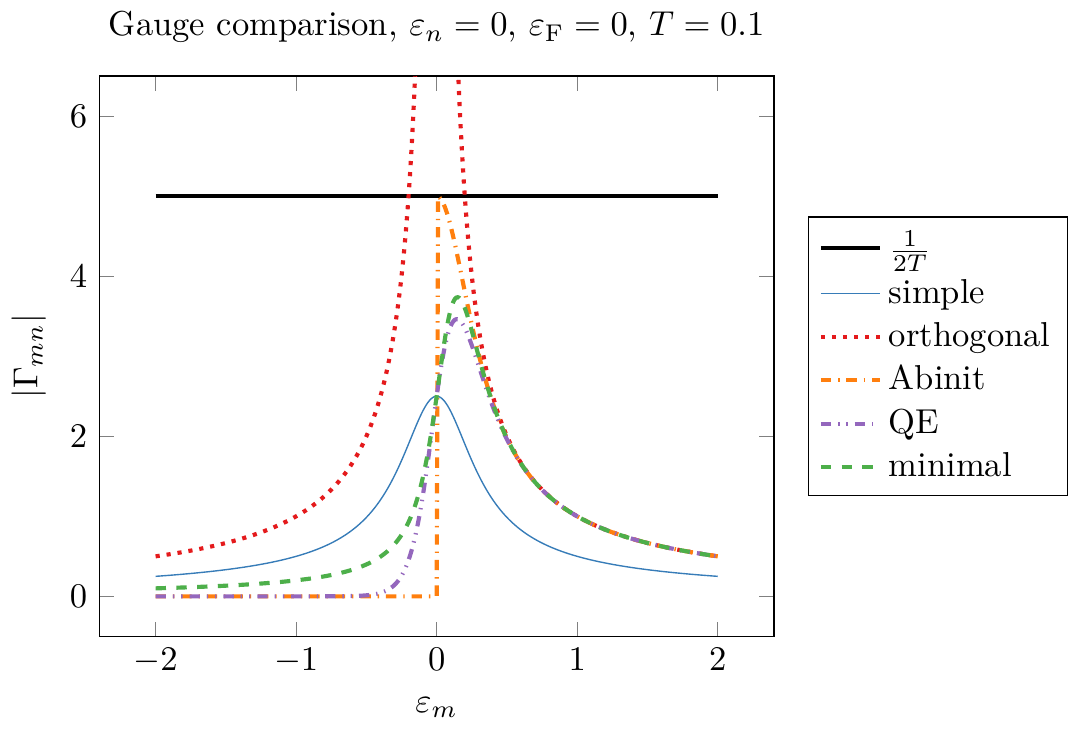}
  \caption{Comparison of gauge choices for $\dH_{mn}=1$.
    Except the orthogonal gauges, all contributions $\Gamma_{mn}$ are bounded by
    $\frac 1 {2T}$.}
  \label{fig:comp_gauges}
\end{figure}

\subsection{Computation of unoccupied-occupied contributions employing a Schur complement}
\label{sec:schur}

Since the $\phi_m$ for $m>N$ are not exactly known,
a different approach is needed for obtaining the contribution $\dphi_n^Q$.
Usually one resorts to solving the Sternheimer equation
\begin{equation}\label{eq:sternheimer1}
  \Forall 1\lq n\lq N,\quad Q(H_\rho - \varepsilon_n)Q\dphi_n^Q = -Q\dH\phi_n
  \eqqcolon b_n
\end{equation}
using iterative schemes restricted to $\Ran(Q)$.
However, for $n=N$ the difference $\varepsilon_{N+1} - \varepsilon_{N}$
can become small, which deteriorates conditioning and increases
the number of iterations required for convergence.

\medskip

We overcome this issue by making use of the $\Nex$ extra bands,
which are anyway available after the SCF algorithm has completed.
Following \autoref{sec:PW_discr_DFPT}
the $\Nex$ extra bands can be divided into two categories:
\begin{enumerate}
\item Some (usually the lower-energy ones)
      have been discarded during the response calculation
      because they have a too small occupation.
      Up to the eigensolver tolerance these are exact eigenvectors.
  \item The remaining ones have served to accelerate the successive
      diagonalization steps during the SCF.
      These have not yet been fully converged.
\end{enumerate}
In any case these extra bands thus offer (at least) approximate information
about some $\phi_m$ for $m>N$,
which is the underlying reason why the following approach accelerates the
computation of $\dphi_n^Q$.

\medskip

For the sake of clarity, we place ourselves here in the discrete setting:
$H_\rho\in\C^{\Nb\times\Nb}$, $\Phi\in\C^{\Nb\times N}$ and $\wt\Phi~\in~\C^{\Nb\times\Nex}$.
We assume that the number of computed bands
$N + \Nex$ is larger than the number of occupied states $N$
and that we trust $\Phi = (\phi_{1},\dots,\phi_{N})$ but not
$\wt{\Phi} = (\wt\phi_{N+1},\dots,\wt\phi_{N+\Nex})$ to be eigenvectors.
These $\Nex$ extra bands consist of both contributions (1) and (2) described at
the beginning of this section.
We assume in addition that $(\Phi,\wt{\Phi})$ forms an orthonormal family and
that $\wt\Phi^*H_\rho\wt\Phi$ is a diagonal matrix whose elements, denoted
by $(\wt\varepsilon_n)_{n=N+1,\dots,N+\Nex}$, are not necessarily all exact
eigenvalues. Note that Rayleigh-Ritz based iterative methods such as the LOBPCG
algorithm fit exactly in this framework.
\begin{figure}
  \centering
  \includegraphics[width=0.5\linewidth]{./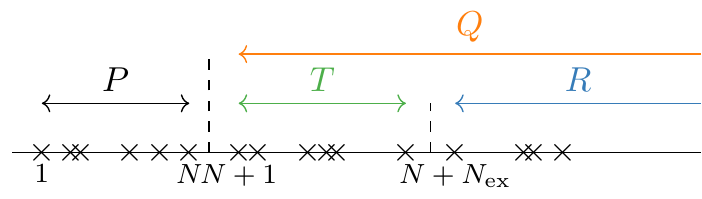}
  \caption{Graphical representation of the Schur decomposition to solve the
    Sternheimer equation. $P$ is the orthogonal projector onto the occupied
    states. $Q$ is the orthogonal projector onto the unoccupied states, and we
    decompose it as the sum of $T$ (extra states which we can use) and $R$
    (remaining states).}
  \label{fig:Q_decomp}
\end{figure}
We decompose
\begin{equation}
    \Ran(Q) = \Ran(T) \oplus \Ran(R),
\end{equation}
where $T$ is the orthogonal projector onto $\Span(\wt\phi_m)_{N<m\lq N+\Nex}$ and
$R=Q-T$ is the projector onto the remaining (uncomputed) states, see \autoref{fig:Q_decomp}.
Then, as $\dphi_n^Q\in\Ran(Q)$, we can decompose
\begin{equation}\label{eq:dphiQ_decomp}
  \dphi_n^Q = {\wt\Phi}\alpha_n + \dphi_n^R,
\end{equation}
where $\alpha_n \in \C^{\Nex}$ and $\dphi_n^R\in\Ran(R)$.
Plugging this into \eqref{eq:sternheimer1} we get
\begin{equation}
  \label{eq:plugged_stern}
  Q(H_\rho-\varepsilon_{n}){\wt\Phi}\alpha_n + Q(H_\rho-\varepsilon_{n})\dphi_n^R = b_n.
\end{equation}
Using a Schur complement we deduce
\begin{equation}\label{eq:alpha_n}
  \alpha_n = \prt{{\wt\Phi}^*(H_\rho-\varepsilon_n){\wt\Phi}}^{-1}\prt{{\wt\Phi}^*b_n -
    {\wt\Phi}^*(H_\rho-\varepsilon_n)\dphi_n^R}.
\end{equation}
Inserting \eqref{eq:alpha_n} into \eqref{eq:plugged_stern}
and projecting on $\Ran(R)$ yields an equation in $\dphi_n^R$:
\begin{equation}
  \label{eq:Schur}
  \begin{split}
    R(H_\rho-\varepsilon_n)\left[
      1 - {\wt\Phi}\prt{{\wt\Phi}^*(H_\rho-\varepsilon_n){\wt\Phi}}^{-1}{\wt\Phi}^*(H_\rho-\varepsilon_n)
    \right] R\dphi_n^R \\ =
    Rb_n - R(H_\rho-\varepsilon_n){\wt\Phi}\prt{{\wt\Phi}^*(H_\rho-\varepsilon_n){\wt\Phi}}^{-1}{\wt\Phi}^*b_n.
  \end{split}
\end{equation}
This equation can then be solved for $\dphi_n^R$ with a Conjugate Gradient
(CG) method which is enforced to stay in $\Ran(R)$ at each iteration.
Afterwards we compute $\alpha_n$ from \eqref{eq:alpha_n}, which yields $\dphi_n^Q$
from \eqref{eq:dphiQ_decomp}.
This scheme has been implemented as the default solver for the Sternheimer
equation in DFTK.

\section{Numerical tests}\label{sec:num}

For all the numerical tests, we use the DFTK
software~\cite{herbstDFTKJulianApproach2021}, a recent plane-wave DFT package
in \texttt{Julia}.  All the codes to run the simulation of this paper are
available
online\footnote{\url{https://github.com/gkemlin/response-calculations-metals}}.
The Brillouin zone is discretized using a uniform
Monkhorst-Pack grid \cite{monkhorstSpecialPointsBrillouinzone1976}. We use
the PBE exchange-correlation functional
\cite{perdewGeneralizedGradientApproximation1996} and GTH pseudopotentials
\cite{goedeckerSeparableDualspaceGaussian1996,hartwigsenRelativisticSeparableDualspace1998}.
The other parameters of the calculation will be specified for each example.
In all the tests, we generate a perturbation $\dH$ from atomic
displacements, with local and nonlocal contributions. Then, we perform two
response calculations: one with the standard approach to solve directly the Sternheimer
equation \eqref{eq:sternheimer1} to compute $\dphi_n^Q$, the other with
the (new) Schur complement approach \eqref{eq:Schur}. Both linear systems are solved using
the conjugate gradient (CG) algorithm, with kinetic energy preconditioning (the linear
solver is preconditioned with the inverse Laplacian, which is diagonal in
Fourier representation),
and we compare the number of iterations required to converge the norm of the residual below
$10^{-9}$. Note that the Sternheimer equation is solved for all $N$ occupied
orbitals and for each $k$-point.

\medskip
If $T>0$ the contribution $\dphi_n^P$ is nonzero
and has been computed using the sum-over-states formula
with the minimal gauge choice \eqref{eq:min_gauge}.
In terms of runtime we expect only negligible differences between the gauge choices.
Moreover, since the time for this contribution is much smaller compared
to the time required to solve the Sternheimer equation,
we do not report a detailed performance comparison on this step in the following.

\medskip
Note that our purpose in this paper is only to improve the numerical algorithms used in response computations. Although the parameters used here (exchange-correlation functionals, pseudopotentials, smearing and Brillouin zone sampling parameters, cut-off energies) might not represent physical reality appropriately, they are representative of practical calculations.

\subsection{Insulators and semiconductors}
\begin{figure}
  \centering
  \includegraphics[height=0.4\linewidth]{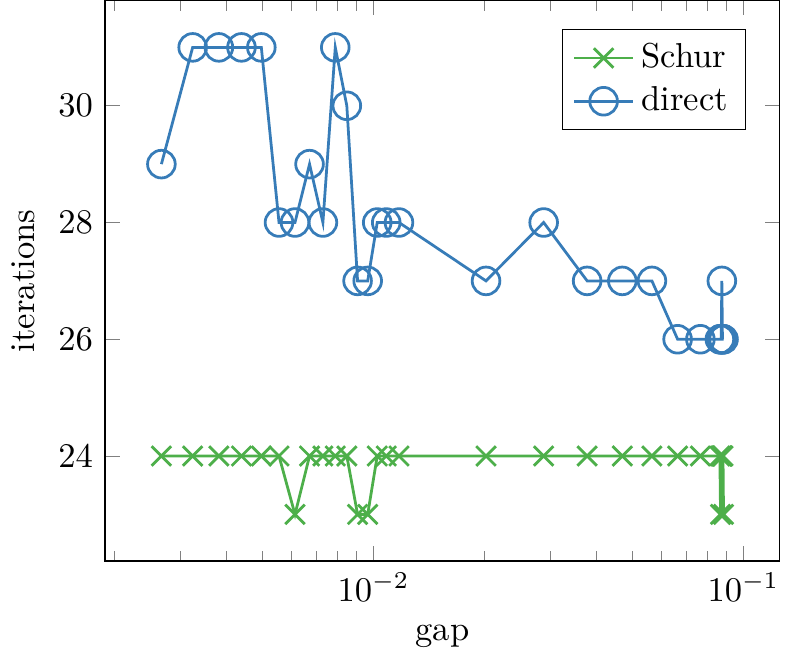}
  \caption{Number of iterations of the linear solver for the Sternheimer
    equation for $n=N=4$ \emph{vs} the gap, with and without the Schur complement
    \eqref{eq:Schur}.}
  \label{fig:schur_sternheimer_silicon}
\end{figure}

For insulators and semiconductors
the gap between occupied and virtual states is usually large.
One would therefore not expect a large gain from using the Schur complement
\eqref{eq:Schur} when computing $\dphi_n = \dphi_n^Q$.
However, for distorted semiconductor structures or semiconductors with defects
the gap can be made arbitrarily small,
such that one would expect to see the Schur-complement approach to be in the advantage.
We test this using an FCC Silicon crystal for which we increase the lattice constant
from 10 bohrs to $11.4$ bohrs
to artificially decrease and eventually close the gap.
All calculations have been performed using a cut-off energy
of $\Ecut = 50$ Ha and a single ${k}$-point (the $\Gamma$-point).
In \autoref{fig:schur_sternheimer_silicon} we plot the number of iterations
required for the linear solver of the Sternheimer equation
to converge, for $n=N=4$.  Using the Schur complement the number of iterations
stays almost constant even when the gap decreases.
In contrast, with a direct approach, the linear solver requires about $30\%$
more iterations near the closing gap.

\subsection{Metals}
The real advantage of using the Schur complement \eqref{eq:Schur} instead of
directly solving the Sternheimer equation \eqref{eq:sternheimer1} becomes apparent
when computing response properties for metals at finite temperature.
We use a standard heuristic which suggests to
fully converge 20\% more bands than the number of electrons pairs of the system, with
3 additional extra bands that are not converged by the successive eigensolvers
of the SCF. We then select the \enquote{occupied} orbitals with an occupation threshold of
$10^{-8}$. In addition to the number of iterations, we also compare the cost of
the response calculations with and without the Schur complement \eqref{eq:Schur}.
For this we consider the total number of Hamiltonian applications
which was required to compute the response $\drho$.
For the small to medium-sized systems we consider here,
the Hamiltonian-vector-product is the most expensive step in an DFT calculation
and thus provides a representative cost indicator.
Notice that both the implementation of the Schur complement and the direct
method require exactly one Hamiltonian application per iteration of the CG.
Additionally the Schur approach requires the computation of $H_\rho\wt\phi$,
which is only a negligible additional cost as this is only needed once per
$k$-point.

\subsubsection{Aluminium} We start by considering an elongated aluminium supercell
with 40 atoms.
We use a cut-off energy $\Ecut=40$ Ha,
a temperature $T=10^{-3}$ Ha with Fermi-Dirac smearing
and a $3\times3\times1$ discretization of the Brillouin zone.
To ensure convergence of the SCF iterations we employ
the Kerker preconditioner \cite{kerkerEfficientIterationScheme1981}.
Since the system has $120$ electrons per unit cell our usual heuristic
converges 72 bands up to the tolerance of the eigensolver
accompanied by 3 bands, which are not fully converged.

\medskip

\begin{figure}
  \centering
  \includegraphics[height=0.4\linewidth]{./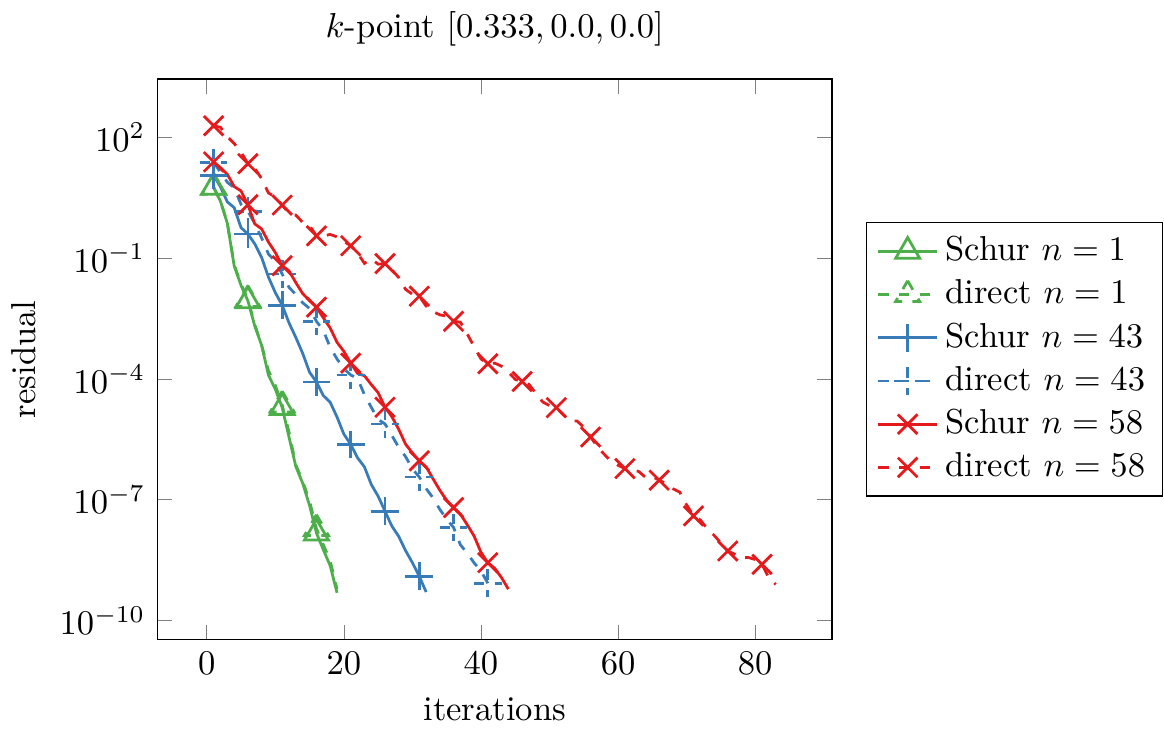}
  \caption{Convergence of the Sternheimer solver for three different orbitals
    for \ch{Al40}. Each curve represents the convergence of the CG which solves
    the Sternheimer  equation for one orbital:  those with the slowest
    convergence are associated to the occupied orbitals with the highest energy.}
  \label{fig:Al40}
\end{figure}
\begin{table}
  \begin{tabular}{@{}cccccccc@{}}
\toprule
\multicolumn{2}{c}{$k$-point -- coordinate}                           & \multicolumn{2}{c}{$1 - [0,0,0]$}         & \multicolumn{2}{c}{$2 - [1/3,0,0]$}         & \multicolumn{2}{c}{$5 - [1/3,1/3,0]$}        \\ \midrule
\multicolumn{2}{c}{$N$}                                 & \multicolumn{2}{c}{$69$}        & \multicolumn{2}{c}{$58$}        & \multicolumn{2}{c}{$67$}       \\
\multicolumn{2}{c}{$\varepsilon_{N+1} - \varepsilon_N$} & \multicolumn{2}{c}{$0.0320$}     & \multicolumn{2}{c}{$0.0134$}    & \multicolumn{2}{c}{$0.0217$}   \\
\multicolumn{2}{c}{\#iterations $n=N$ Schur}                &
\multicolumn{2}{c}{$48$}        & \multicolumn{2}{c}{$\bm{44}$} &
\multicolumn{2}{c}{$41$}       \\
\multicolumn{2}{c}{\#iterations $n=N$ direct}             &
\multicolumn{2}{c}{$56$}        & \multicolumn{2}{c}{$\bm{83}$} & \multicolumn{2}{c}{$58$}       \\ \bottomrule
\end{tabular}

  \caption{Convergence data for $k$-points 1, 2 and 5 for \ch{Al40}.
    Other $k$-points are not displayed but they all behave as one of these by
    symmetry.  $N$ is the number of occupied bands, for an occupation
    threshold of $10^{-8}$. }
  \label{tab:Al40}
\end{table}
The convergence behaviour when solving the Sternheimer equation
for $k$-points of particular interest
is shown in \autoref{tab:Al40} and \autoref{fig:Al40}.
As expected, for $k$-points with a small difference
$\varepsilon_{N+1}-\varepsilon_N$,
the Schur complement \eqref{eq:Schur} brings a noteworthy improvement
with roughly $50\%$ fewer iterations required to achieve convergence.
Since the system we consider here has numerous occupied bands
--- between $60$ and $70$ depending on the $k$-point
--- most bands already feature a well-conditioned Sternheimer equation.
Considering the cost for computing the total response,
the Schur approach therefore overall only achieves
a reduction by 17\% in the number of Hamiltonian applications,
from about $17,800$ (direct) to $14,800$ (with Schur).
However, it should be noted that this improvement essentially comes for free
as the extra bands are anyway provided by the SCF computation as a byproduct.

\subsubsection{Heusler system}
Next we study the response calculation of Heusler-type transition-metal alloys.
We focus mainly on the \ch{Fe2MnAl} system but other compounds,
such as the \ch{Fe2CrGa} and \ch{CoFeMnGa} alloy systems,
have been tested and similar results were obtained.
Heusler alloys are of considerable practical interest
due to their rich and unusual magnetic and electronic properties.
For instance,
\ch{Fe2MnAl} shows halfmetallic behaviour: the majority spin channel (denoted by
$\uparrow$) behaves like a metal whereas the minority spin channel
(denoted by $\downarrow$) behaves like an insulator as it has a vanishing
density of states at the Fermi level. See \cite{herbstRobustEfficientLine2022},
and reference therein, for more details as well as an analysis of the SCF
convergence on such systems.
For these systems we use a cut-off $\Ecut = 45$ Ha, a temperature $T=10^{-2}$ Ha
with Gaussian smearing
and a $13\times13\times13$ discretization of the Brillouin zone. The SCF was
converged using a Kerker preconditioner
\cite{kerkerEfficientIterationScheme1981}. Moreover, as we deal with a
spin-polarized system, the numerical simulation slightly differs.
The orbitals $\phi_{(n,\bm k)}^\sigma$ and
the occupation numbers $f_{(n,\bm k)}^\sigma$ depend on the spin orientation
$\sigma\in\{\uparrow,\downarrow\}$ and the $f_{(n,\bm k)}^\sigma$'s
belong to $[0, 1)$ instead of $[0, 2)$.
Furthermore we modify the heuristic to determine the number of bands to be computed:
\ch{Fe2MnAl} has $\Nel=50$ electrons per unit cell and we use
$25 + 0.2\times50 = 35$ fully converged bands per $k$-point,
complemented by 3 additional bands, which are not checked for convergence.

\medskip

\begin{table}
  \begin{tabular}{@{}cccccc@{}}
\toprule
\multicolumn{2}{c}{spin channel}                        & \multicolumn{2}{c}{$\uparrow$}  & \multicolumn{2}{c}{$\downarrow$} \\ \midrule
\multicolumn{2}{c}{$N$}                                 & \multicolumn{2}{c}{$28$}        & \multicolumn{2}{c}{$26$}         \\
\multicolumn{2}{c}{$\varepsilon_{N+1} - \varepsilon_N$} & \multicolumn{2}{c}{$0.0423$}    & \multicolumn{2}{c}{$0.0154$}     \\
\multicolumn{2}{c}{\#iterations $n=N$ Schur}                &
\multicolumn{2}{c}{$\bm{45}$}        & \multicolumn{2}{c}{$\bm{45}$}         \\
\multicolumn{2}{c}{\#iterations $n=N$ direct}             & \multicolumn{2}{c}{$\bm{86}$}        & \multicolumn{2}{c}{$\bm{103}$}        \\ \bottomrule
\end{tabular}

  \caption{Convergence data for the two spin channels of the $k$-point with
    reduced coordinates $[0.385,0.231,0.077]$ for \ch{Fe2MnAl}.
    $N$ is the number of occupied bands, for an occupation
    threshold of $10^{-8}$. }
  \label{tab:Fe2MnAl}
\end{table}
\begin{figure}
  \centering
  \includegraphics[height=0.4\linewidth]{./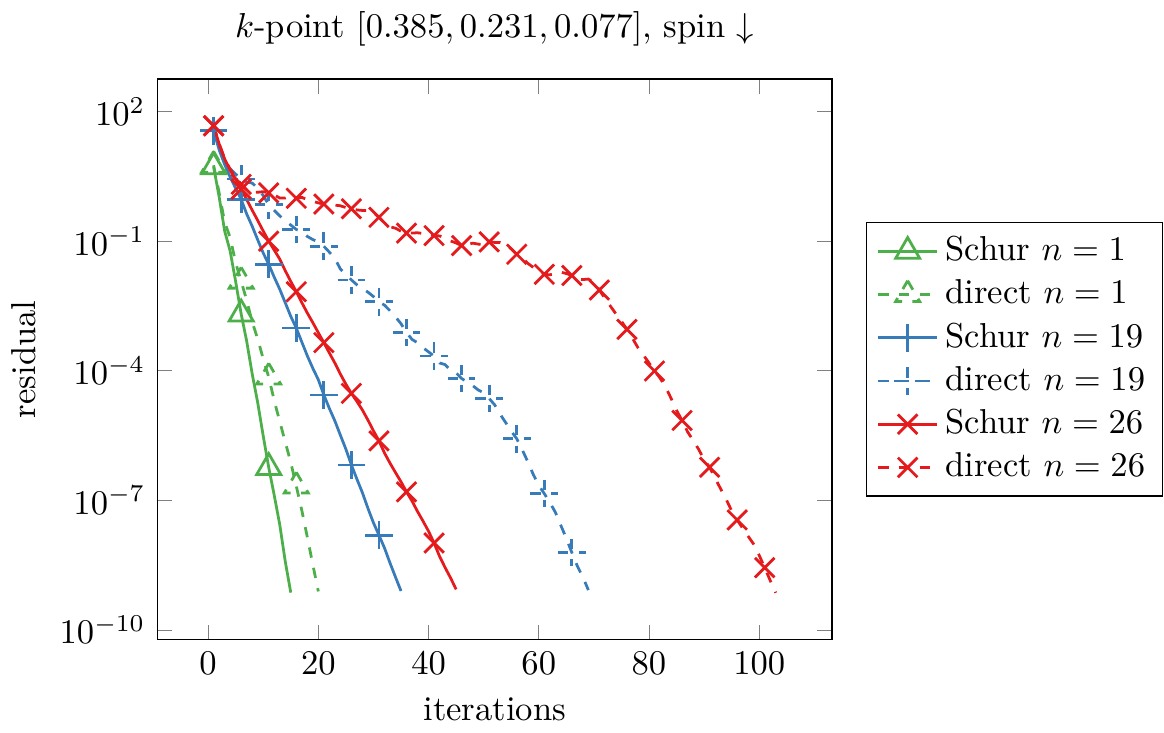}
  \caption{Convergence of the Sternheimer solver for three different orbitals
    for \ch{Fe2MnAl}. Each curve represents the convergence of the CG which solves
    the Sternheimer  equation for one orbital:  those with the slowest
    convergence are associated to the occupied orbitals with the highest energy.}
  \label{fig:Fe2MnAl}
\end{figure}
We show in \autoref{tab:Fe2MnAl} and \autoref{fig:Fe2MnAl} the results for the
two spin channels of the $k$-point with reduced coordinates
$[0.385,0.231,0.077]$. The other $k$-points behave similarly.
Since both channels feature a small difference
$\varepsilon_{N+1} - \varepsilon_N$ using the Schur complement
\eqref{eq:Schur} to solve the Sternheimer equation has a significant impact:
for the orbitals with highest energy it reduces the number of iterations by half.
For the direct approach we notice a plateau where the solver
encounters difficulties to converge the Sternheimer equation for the $N$-th orbital
due to the small gap.
\begin{figure}
  \centering
  \includegraphics[height=0.4\linewidth]{./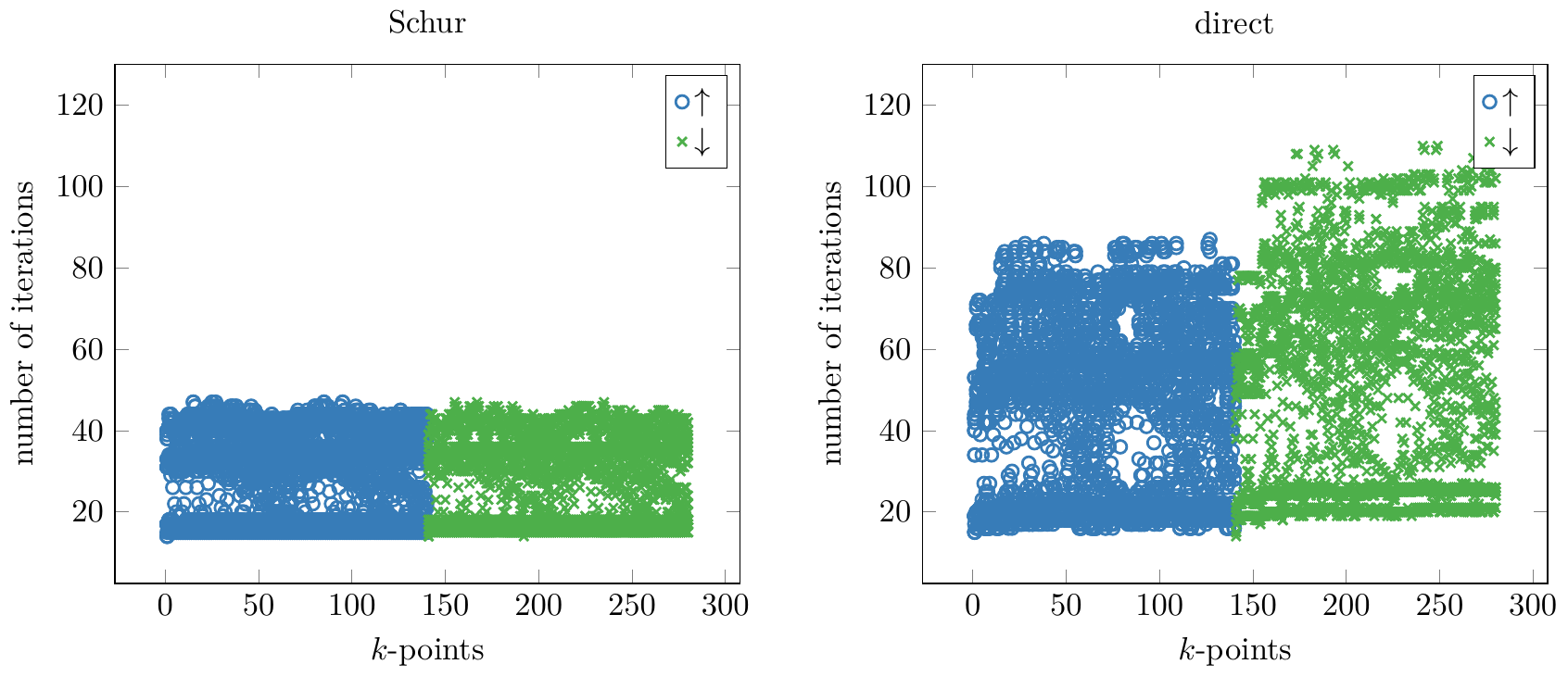}
  \caption{Histogram of the number of iterations of the CG to solve the
    Sternheimer equation, with and without the Schur complement
    \eqref{eq:Schur}. On the $x$-axis, the $k$-point index number: the first 140
    (blue $\circ$) have spins up, and the last 140 (green $\times$) have the same coordinates
    but with spins down. For each
    of these $k$-points, we plot the number of iterations for every occupied
    band of the $k$-point.}
  \label{fig:Fe2MnAl_histo}
\end{figure}
Unlike the aluminium case the improvements observed for the Heusler alloys
are not restricted to a small number of bands.
In \autoref{fig:Fe2MnAl_histo} we contrast the number of iterations
required to solve the Sternheimer equation for every band at every $k$-point
with and without using the Schur complement.
Notice that lattice symmetries allow to reduce the number of explicitly treated
$k$-points to $140$ albeit we are using a  $13\times13\times13$ $k$-point grid.
In terms of the total number of Hamiltonian applications required
for the response calculation, the Schur complement achieves
a reduction by roughly $40\%$, from around $344,000$ (without Schur)
to $208,000$ (with Schur).
It should be noted that in this system the standard heuristic
caused a large portion of the available extra bands to be fully converged,
thus providing an ideal setting for the Schur complement approach to be effective.
For example for the $k$-point discussed in \autoref{tab:Fe2MnAl}
seven extra bands have been fully converged and an additional three partially.
Given the enormous importance of Heusler systems and the known
numerical difficulties for computing response properties in these systems,
our result is encouraging and motivates the development of a more
economical heuristic for choosing the number of converged bands in future work.

\subsection{Comparison to shifted Sternheimer approaches}
In the literature other strategies for computing $\drho$ have been reported.
We briefly consider the approach proposed
in \cite{baroniPhononsRelatedCrystal2001}, where the response is computed as
\begin{equation}\label{eq:drho_baroni}
  \drho(\bm r) = \sum_{n=1}^N2\phi_n^*(\bm r)\dphi_n(\bm r) - f'_n
  \deF|\phi_n(\bm r)|^2.
\end{equation}
Instead of splitting $\dphi_n$ into two contributions,
the full $\dphi_n$ is computed for all $n\lq N$
by solving the \emph{shifted} Sternheimer equation
\begin{equation}\label{eq:shifted_sternheimer}
  (H_\rho + S - \varepsilon_n) \dphi_n = -(f_n - S_n)\dH\phi_n.
\end{equation}
Here $S:\Ran(P) \to \Ran(P)$ is a shift operator acting on the space of occupied
orbitals, chosen so that the linear system is nonsingular
(for any $n\lq N$, $H_\rho-\varepsilon_n$ is not invertible). Then, $S_n$
is chosen for every $n\lq N$ such that $\drho$ from  \eqref{eq:drho_baroni} satisfies
\eqref{eq:reponse}.
However, as $S$ only acts on $\Ran(P)$,
equation still becomes badly conditioned
if $\varepsilon_{N+1}- \varepsilon_N$ is too small.
This becomes apparent when solving the shifted Sternheimer equation
\eqref{eq:shifted_sternheimer} for the \ch{Fe2MnAl} system,
see \autoref{fig:shifted_sternheimer}.
\begin{figure}
  \includegraphics[width=0.48\linewidth]{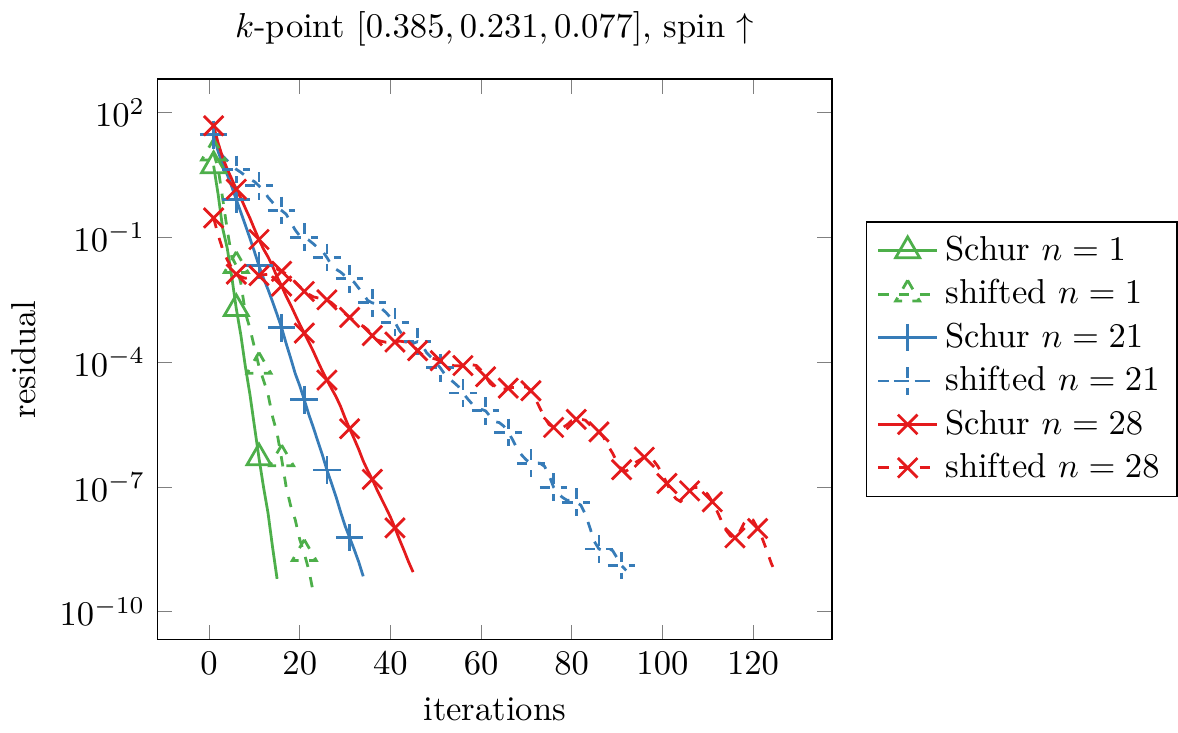}
  \includegraphics[width=0.48\linewidth]{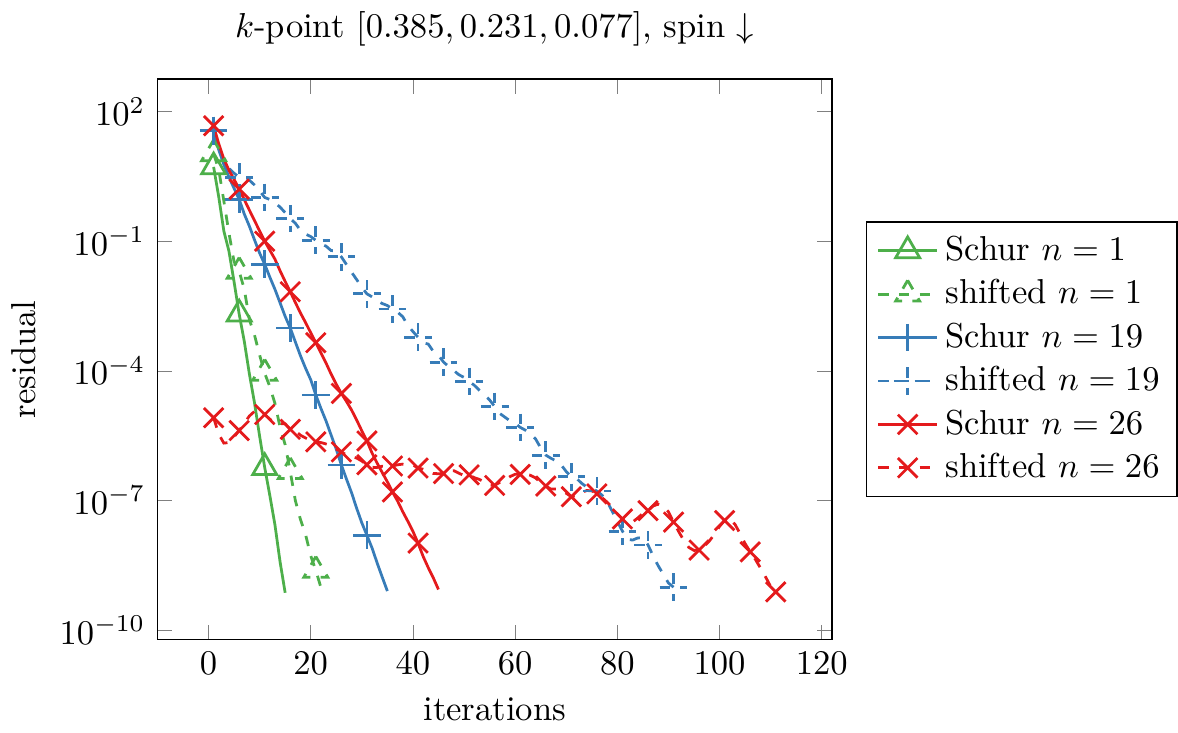}
  \caption{Resolution of the Sternheimer equation for both spin channels of one
    specific $k$-point for the \ch{Fe2MnAl} system, with the Schur approach
    \eqref{eq:Schur} and the shifted approach \eqref{eq:shifted_sternheimer}.
    Note that for this particular $k$-point, the spin $\downarrow$ channel has a
  starting point with already small residual for the highest occupied energy
  level. This is due to the term $f_n$ appearing in
  \eqref{eq:shifted_sternheimer}, and the convergence is still slow. }
  \label{fig:shifted_sternheimer}
\end{figure}
For the orbital responses of the highest-energy occupied bands
the CG iterations on the shifted Sternheimer equation converge very slowly
--- in contrast to the Schur complement approach \eqref{eq:Schur}
we proposed in this work.
In terms of the number of Hamiltonian applications,
the shifted Sternheimer strategy required around $492,000$ applications
versus $208,000$ for the Schur complement approach.

\section{Conclusion}
In density-functional theory the simulation of many physical properties
requires the computation of the response of the ground-state density
to an external perturbation.
In this work we have reviewed the standard formalism of such
response calculations from the point of view of numerical analysis.
We provided an overview of the possible gauge choices for representing
the density response, summarizing and contrasting the approaches employed
by state-of-the-art codes such
as Quantum Espresso~\cite{giannozziQUANTUMESPRESSOModular2009}
or Abinit~\cite{romeroABINITOverviewFocus2020}
in a common framework.
\medskip

Based on our analysis we furthermore suggested two novel approaches
for DFT response calculations.
For the occupied-occupied part of the response we developed
a gauge choice based on the idea to maximize numerical stability
in the involved sums by minimizing the numerical range of the
individual orbital contributions.
For the occupied-unoccupied part of the response we suggested
a novel approach to solving the Sternheimer equation
based on a Schur complement.
Key idea of this approach is to make use of
the additional (partially) converged bands,
which are available as a byproduct from the
preceding self-consistent field (SCF) procedure
(which yields the ground-state density).
Without additional computational effort this allows
to improve the conditioning of the Sternheimer equation and thus
accelerate its convergence.
We demonstrated this numerically on a number of practically relevant problems,
including response calculations on
small-gapped semiconductors, elongated metallic slabs
or numerically challenging Heusler alloy systems.
Overall the Schur complement approach allowed to obtain a converged response
saving up to 40\% in the required Hamiltonian applications
--- the cost-dominating step in small to medium-sized DFT problems.
For larger systems we similarly expect savings from introducing a Schur
complement technique, even though algorithms commonly employ different trade-offs.
\medskip

In this work we followed standard heuristics
for selecting the number of extra bands to employ in the SCF calculations
and thus the number of additional bands available
when solving the response problem.
However, our results emphasize the need for a more robust understanding
between the computed number of bands and the observed rate of convergence.
We have provided some initial ideas for such an analysis in the appendix,
but leave a more exhaustive discussion for future work.

\section*{Acknowledgements}
This project has received funding from the European Research Council (ERC) under the European Union’s Horizon 2020 research and innovation programme (grant agreement No 810367). M.H. and B.S. acknowledge funding by the Federal Ministry of Education and Research (BMBF) and the Ministry of Culture and Science of the German State of North Rhine-Westphalia (MKW) under the Excellence Strategy of the Federal Government and the Länder.

\bibliographystyle{abbrv}
\bibliography{biblio}

\appendix

\section{Choosing the number of extra bands}

In this paper, we saw through various numerical examples that
using a Schur complement to compute the unoccupied-occupied contributions to the
orbitals' response improves the convergence of the Sternheimer equation.
In this appendix, we quantify this acceleration and discuss how this idea
can be used to select the number of bands to be computed.
Considering the straight convergence curves from
Figures~\ref{fig:Al40}--\ref{fig:Fe2MnAl} suggest that the convergence of the
CG is indeed led by the square root of the condition number
of the system matrix (see \cite[Section
9]{shewchukIntroductionConjugateGradient1994}) when using the Schur complement.
Key idea will thus be to estimate the condition number of the linear system
\eqref{eq:Schur}.

\subsection{Numerical analysis}\label{sec:analysis}

To analyse the condition number of the Schur complement, we consider the following
specific setting
\begin{equation}
  \begin{cases}
    H_\rho \phi_n = \varepsilon_n\phi_n,\quad\varepsilon_1 \lq \varepsilon_2 \lq \cdots \\
    \cro{\phi_n,\phi_m} = \delta_{nm},
  \end{cases}
\end{equation}
where $H_\rho\in\C^{\Nb\times\Nb}$
is typically the discretized self-consistent Hamiltonian of the system,
at some $k$-point. We assume that we have $N$ occupied orbitals
that have an occupation number higher than the threshold we fixed and that we
have $\Nex$ extra bands, as explained in \autoref{sec:schur}.
In summary, we have at our disposal $N + \Nex$ bands in total:
$\Phi = (\phi_1,\dots,\phi_N)$ are occupied, fully converged bands and the extra bands
$\Phi_{\rm ex}^{\ell} = (\phi^{\ell}_{N+1},\dots,\phi^{\ell}_{N+\Nex})$ are not necessarily
all converged.  We added here the exponent $\ell$ as we
make the following assumptions:
\begin{itemize}
  \item for any $\ell\in\N$, $(\Phi,\Phi_{\rm ex}^{\ell})$ is an orthonormal family;
  \item for any $\ell\in\N$, $(\Phi_{\rm ex}^{\ell})^* H_\rho\Phi_{\rm ex}^{\ell} \in
    \C^{\Nex\times\Nex}$ is a  diagonal matrix whose elements are labelled
    $\varepsilon_{m}^{\ell} \coloneqq \cro{\phi_m^{\ell},H_\rho\phi_m^{\ell}}$ for
    $N+1\lq m\lq N+\Nex$;
  \item as $\ell\to+\infty$, $(\phi^{\ell}_{m}, \varepsilon^{\ell}_{m}) \to
    (\phi_{m}, \varepsilon_{m})$.
\end{itemize}
All these assumptions are satisfied for instance if the sequence
$(\Phi,\Phi_{\rm ex}^{\ell})_{l\in\N}$ is generated by any Rayleigh-Ritz based eigensolver
(for instance the LOBPCG eigensolver
\cite{knyazevOptimalPreconditionedEigensolver2001}), which is the case by
default in DFTK.
For every $\ell$, we can thus decompose the plane-wave approximation space
$\Hc = X_{\Nb}$ (with $\Nb \gg N+\Nex$) in two different ways:
\begin{equation}
  \Hc = \Ran(P) \oplus \Ran(T) \oplus \Ran(R) \quad\text{and}\quad
  \Hc = \Ran(P) \oplus \Ran(T^{\ell}) \oplus \Ran(R^{\ell}),
\end{equation}
where
\begin{equation}
  P \coloneqq \sum_{n=1}^N \phi_n\phi_n^*\quad\text{and}\quad
  \begin{cases}
    T \coloneqq \sum_{n=N+1}^{\Nex} \phi_n\phi_n^*, \quad R \coloneqq 1 - P - T \\
    T^{\ell} \coloneqq \sum_{n=N+1}^{\Nex} \phi_n^{\ell}(\phi_n^{\ell})^*, \quad R^{\ell} \coloneqq 1 - P - T^{\ell},
  \end{cases}
\end{equation}
are all orthogonal projectors.
In these two
decompositions, $H_\rho$ has the associated block representations:
\begin{equation}
  H_\rho = \begin{pmatrix}
    E & 0 & 0 \\ 0 & E_{\rm ex} & 0 \\ 0 & 0 & \ddots
  \end{pmatrix}
  \quad\text{and}\quad
  H_\rho = \begin{pmatrix}
    E & 0 & 0 \\ 0 & E^{\ell}_{\rm ex} & R^{\ell}H_\rho T^{\ell} \\ 0 & T^{\ell}H_\rho R^{\ell} & R^{\ell}H_\rho R^{\ell}
  \end{pmatrix}
\end{equation}
where $E \coloneqq {\rm Diag}(\varepsilon_1,\dots,\varepsilon_n)$,
$E_{\rm ex} \coloneqq {\rm Diag}(\varepsilon_{N+1},\dots,\varepsilon_{N+\Nex})$ and
$E_{\rm ex}^{\ell} \coloneqq {\rm Diag}(\varepsilon^{\ell}_{N+1},\dots,\varepsilon^{\ell}_{N+\Nex})$
are diagonal matrices. Moreover, note that as
$\Phi^{\ell}_{\rm ex} \to \Phi_{\rm ex}$, the residuals $R^{\ell}H_\rho T^{\ell}$ converge to $0$.

\medskip

Now, we fix $n\lq N$ and we compute the condition number of the linear system
\eqref{eq:Schur}. Enforcing the CG to stay
at each iteration in $\Ran(R^{\ell})$, this condition number is given by the
ratio of the largest and smallest nonzero eigenvalues of
\begin{equation}\label{eq:schur_matrix}
  H_n^{\ell} + X_n^{\ell},
\end{equation}
where
\begin{equation}
  \begin{split}
    H_n^{\ell} \coloneqq R^{\ell}(H_\rho - \varepsilon_n)R^{\ell} \quad\text{and}\quad
    X_n^{\ell} &= -R^{\ell}(H_\rho-\varepsilon_n)\Phi_{\rm ex}^{\ell}
    (E_{\rm ex}^{\ell} - \varepsilon_n)^{-1}(\Phi_{\rm ex}^{\ell})^*(H_\rho - \varepsilon_n)R^{\ell}\\
    &= -R^{\ell}H_\rho\Phi_{\rm ex}^{\ell}(E_{\rm ex}^{\ell} - \varepsilon_n)^{-1}
    (\Phi_{\rm ex}^{\ell})^*H_\rho R^{\ell}.
  \end{split}
\end{equation}
Here $E^{\ell}_{\rm ex}-\varepsilon_n$ is diagonal and thus explicitly invertible if
$\ell$ is large enough as
$\varepsilon_{N+1}^{\ell}\to\varepsilon_{N+1}>\varepsilon_N\gq\varepsilon_n$.
We focus for the moment on the smallest nonzero eigenvalue, that is
$\varepsilon^{\ell}_{N+\Nex+1} - \varepsilon_n$.
The condition number being proportional to the inverse of the smallest eigenvalue,
we now derive a lower bound of $\varepsilon^{\ell}_{N+\Nex+1} - \varepsilon_n$ in
order to get an upper bound on the condition number of \eqref{eq:schur_matrix}.
When $\ell\to+\infty$, we have $X_n^{\ell}\to 0$ (as $RH_\rho\Phi_{\rm ex}=0$) and
$H_n^{\ell}\to H_n\coloneqq R(H_\rho - \varepsilon_n)R$ whose smallest nonzero eigenvalue is
$\varepsilon_{N+\Nex+1} - \varepsilon_n$. We use next a perturbative
approach to effectively approximate the condition number of
\eqref{eq:schur_matrix}.

\medskip

We use a standard eigenvalue perturbation result, whose proof is recalled for the
sake of completeness.  It is directly adapted from the general case of
self-adjoint bounded below operators with symmetric perturbations
studied for instance in \cite{dussonAnalysisFeshbachSchurMethod2020}.
\begin{proposition}\label{prop:perturbation}
  Let $N\in\N$, $H_0,W\in \C^{N\times N}$ be Hermitian matrices and $\alpha\gq0$
  such that $H_0+\alpha>0$. Then, the eigenvalues of $H\coloneqq H_0+W$ and $H_0$ satisfy
  \begin{equation}
    \abs{\nu_i(H) - \nu_i(H_0)} \lq
    (\nu_i(H_0)+\alpha)\norm{W}_{H_0,\alpha},
  \end{equation}
  where $\norm{W}_{H_0,\alpha}$ is the operator norm of
  $(H_0+\alpha)^{-1/2}W(H_0+\alpha)^{-1/2}$ and
  $\nu_i(A)$ is the $i$-th lowest eigenvalue of the matrix $A$.
\end{proposition}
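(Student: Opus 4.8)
The plan is to reduce everything to the Loewner order on Hermitian matrices and then invoke the monotonicity of ordered eigenvalues. Write $A \coloneqq H_0 + \alpha I$, which is Hermitian positive definite by hypothesis, and set $\delta \coloneqq \norm{W}_{H_0,\alpha} = \norm{A^{-1/2} W A^{-1/2}}$. Since $A^{-1/2} W A^{-1/2}$ is Hermitian, its operator norm controls its spectrum on both sides, so that $-\delta I \preceq A^{-1/2} W A^{-1/2} \preceq \delta I$.

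First I would transport this two-sided bound back to $W$ by congruence with $A^{1/2} \succ 0$. Conjugation by a positive definite matrix preserves the Loewner order, so multiplying the previous relation on both sides by $A^{1/2}$ gives $-\delta A \preceq W \preceq \delta A$. Adding $A = H_0 + \alpha I$ and using $H = H_0 + W$ then produces the sandwich $(1-\delta) A \preceq H + \alpha I \preceq (1+\delta) A$.

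Next I would apply the monotonicity of the ordered eigenvalues $\nu_i$ to this sandwich: from the Courant--Fischer min--max principle, $X \preceq Y$ implies $\nu_i(X) \lq \nu_i(Y)$ for every index $i$. Using moreover that $\nu_i(cA) = c\,\nu_i(A)$ for a nonnegative scalar $c$, together with $\nu_i(H + \alpha I) = \nu_i(H) + \alpha$ and $\nu_i(A) = \nu_i(H_0) + \alpha$, the sandwich becomes $(1-\delta)(\nu_i(H_0)+\alpha) \lq \nu_i(H) + \alpha \lq (1+\delta)(\nu_i(H_0)+\alpha)$. Subtracting the common quantity $\nu_i(H_0) + \alpha$ throughout and taking absolute values yields exactly $\abs{\nu_i(H) - \nu_i(H_0)} \lq \delta\,(\nu_i(H_0)+\alpha)$, which is the asserted estimate.

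The only delicate point, and the one I expect to be the real obstacle, lies in the scaling $\nu_i((1-\delta)A) = (1-\delta)\,\nu_i(A)$ used for the lower bound: this identity requires the scalar $1-\delta$ to be nonnegative, i.e.\ $\norm{W}_{H_0,\alpha} \lq 1$, since a negative multiplier reverses the eigenvalue ordering and destroys the bound (a simple $2\times2$ example shows the estimate indeed fails once $\delta > 1$). I would therefore record $\delta \lq 1$ as the operative regime; it is harmless in the intended application, where $W$ stands for the vanishing off-diagonal residual block $R^\ell H_\rho T^\ell$, so that $\delta \to 0$. The upper bound, by contrast, holds unconditionally because $1+\delta > 0$.
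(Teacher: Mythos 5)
Your proof is correct and is essentially the paper's own argument: the paper works directly with the quadratic-form sandwich $(1-\delta)\cro{u,(H_0+\alpha)u}\lq\cro{u,(H+\alpha)u}\lq(1+\delta)\cro{u,(H_0+\alpha)u}$ and applies the min--max theorem, which is exactly your Loewner-order congruence argument in different notation. The caveat you flag is genuine and well spotted: the lower bound does require $\norm{W}_{H_0,\alpha}\lq 1$ (otherwise $\nu_i\prt{(1-\delta)(H_0+\alpha)}\neq(1-\delta)\nu_i(H_0+\alpha)$ and the estimate can fail), and the paper's proof silently makes this same assumption at the step where it writes $(1-\norm{W}_{H_0,\alpha})\nu_i(H_0)-\alpha\norm{W}_{H_0,\alpha}\lq\nu_i(H)$; as you note, this is harmless in the intended application where $W_n^{\ell}\to 0$.
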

\begin{proof}
  Let $u\in\C^N$ and define $v\coloneqq (H_0+\alpha)^{1/2}u$. Then,
  \begin{equation}
    \begin{split}
      \abs{\cro{u,Hu} - \cro{u,H_0u}} = \abs{\cro{u,Wu}} &=
      \abs{\cro{v, (H_0+\alpha)^{-1/2}W(H_0+\alpha)^{-1/2}v}} \\
      &\lq \norm{W}_{H_0,\alpha}\cro{v,v} =  \norm{W}_{H_0,\alpha}\cro{u,(H_0+\alpha)u}.
    \end{split}
  \end{equation}
  Therefore,
  \begin{equation}
    (1 - \norm{W}_{H_0,\alpha})\cro{u,H_0u} - \alpha\norm{W}_{H_0,\alpha}\cro{u,u}
    \lq \cro{u,Hu} \lq
    (1 + \norm{W}_{H_0,\alpha})\cro{u,H_0u} + \alpha\norm{W}_{H_0,\alpha}\cro{u,u}.
  \end{equation}
  The min-max theorem then yields for $i=1,\dots,N$,
  \begin{equation}
    (1 - \norm{W}_{H_0,\alpha})\nu_i(H_0) - \alpha\norm{W}_{H_0,\alpha}
    \lq \nu_i(H) \lq
    (1 + \norm{W}_{H_0,\alpha})\nu_i(H_0) + \alpha\norm{W}_{H_0,\alpha},
  \end{equation}
  which gives the desired inequality.
\end{proof}

\medskip

In our case, we can apply this result to
\begin{equation}
  H_n^{\ell} + X_n^{\ell} = H_n + (H_n^{\ell} - H_n) + X_n^{\ell},
\end{equation}
with $H_0 = H_n$, $W = W_n^{\ell} \coloneqq (H_n^{\ell} - H_n) + X_n^{\ell}$ and
$\alpha=\varepsilon_{N+\Nex+1} -\varepsilon_n> 0$. \autoref{prop:perturbation}
applied to the
$(N+\Nex+1)$-th eigenvalues then yields
\begin{equation}
  \varepsilon_{N+\Nex+1}^{\ell} -\varepsilon_n\gq
  \prt{\varepsilon_{N+\Nex+1}-\varepsilon_n}
  \prt{1 - 2\norm{W_n^{\ell}}_{H_n,\varepsilon_{N+\Nex+1}-\varepsilon_n}}
  \approx  \prt{\varepsilon_{N+\Nex+1}-\varepsilon_n},
\end{equation}
where we assume that $2\norm{W_n^{\ell}}_{H_n,\varepsilon_{N+\Nex+1}-\varepsilon_n}$
is small enough to be negligible with respect to 1, which is the case if the
extra states are sufficiently converged.
Now, if this bound is valid in theory, in practice we do not have access to
$\varepsilon_{N+\Nex+1}$ as we work with $N+\Nex$ bands
only.   However, up to loosing sharpness, we can use that
$\varepsilon_{N+\Nex+1}\gq\varepsilon_{N+\Nex}$ where
$\varepsilon_{N+\Nex}$ can be estimated using the last extra band.
Indeed, using for instance the Bauer-Fike bound
(\cite[Theorem 1]{herbstPosterioriErrorEstimation2020} or
\cite{saadNumericalMethodsLarge2011}), we obtain
\begin{equation}
  \varepsilon_{N+\Nex} \gq \varepsilon_{N+\Nex}^{\ell}
  - \norm{r_{N+\Nex}^{\ell}},
\end{equation}
where $r_{N+\Nex}^{\ell}$ is the residual associated to the last extra
band. Of course, this estimate is not sharp as we expect the error on the
eigenvalue to behave as the square of the residual, but this requires to
estimate the gap to the rest of the spectrum, see for instance the Kato-Temple
bound \cite[Theorem 2]{herbstPosterioriErrorEstimation2020}.
In the end, we have the following lower bound for
$\varepsilon_{N+\Nex+1}^{\ell}-\varepsilon_n$:
\begin{equation}
  \varepsilon_{N+\Nex+1}^{\ell} -\varepsilon_n\gq \prt{\varepsilon_{N+\Nex}^{\ell}-\varepsilon_n
    - \norm{r_{N+\Nex}^{\ell}}}
  \approx \varepsilon_{N+\Nex}^{\ell}-\varepsilon_n,
\end{equation}
where we assume again that $\norm{r_{N+\Nex}^{\ell}}$ is small enough with respect to
$\varepsilon_{N+\Nex}^{\ell}-\varepsilon_n$.

\medskip

We can now derive an upper bound on $\kappa_{n}^{\ell}$, the condition number of
\eqref{eq:schur_matrix}. It is given by the ratio of its highest eigenvalue
and $\varepsilon_{N+\Nex+1}^{\ell} -\varepsilon_n$. Since the Laplace operator is
the higher-order term in the Kohn-Sham Hamiltonian, the highest
eigenvalue is, as usually in plane-wave simulations, of order $\Ecut$.  With
proper kinetic preconditioning, we can assume that its contribution to the
condition number of the linear system is constant with respect to $\Ecut$ and $n$ so that,
finally,
\begin{equation}
  \kappa_n^{\ell} \lesssim \frac{C}{\varepsilon_{N+\Nex+1}^{\ell} -\varepsilon_n}
  \lesssim \frac{C}{\varepsilon_{N+\Nex}^{\ell}-\varepsilon_n}.
\end{equation}
Therefore the condition number is bounded from above by
$C/(\varepsilon_{N+\Nex}^{\ell}-\varepsilon_n)$ to first-order. The number of CG iterations
to solve the linear system \eqref{eq:Schur} with a given accuracy is then proportional to
the square root of the condition number of the matrix \eqref{eq:schur_matrix}
(see \cite{shewchukIntroductionConjugateGradient1994}):
\begin{equation}\label{eq:kappa}
  \sqrt{\kappa_n^{\ell}} \lesssim
  \sqrt{\frac{C}{{\varepsilon_{N+\Nex}^{\ell}-\varepsilon_n}}}.
\end{equation}
Note that this upper bound is valid provided that the extra bands are converged
enough, not necessarily fully, and proper kinetic preconditioning is employed.

\medskip

Estimate \eqref{eq:kappa} leads, as expected, to the qualitative conclusion
that the more extra bands we use, the higher the difference
$\varepsilon_{N+\Nex}^{\ell}-\varepsilon_n$ and the faster the convergence.
However, note that it is not possible to evaluate directly the convergence
speed as the constant $C$ is \emph{a priori} unknown, in particular if we use
preconditioners.

\subsection{An adaptive strategy to choose the number of extra bands}
\label{sec:adap}

The main bottleneck of \eqref{eq:kappa} is the estimation of the constant $C$.
However, one can reasonably assume that this
constant does not depend too much on $n$, so that the ratio between the number
of iterations to reach convergence between the last occupied band ($n=N$) and the
first band $(n=1)$ can be estimated by
\begin{equation}\label{eq:ratio}
  \xi_{\Nex}^{\ell} \coloneqq \sqrt{\frac{\varepsilon_{N+\Nex}^{\ell}-\varepsilon_1}
    {\varepsilon_{N+\Nex}^{\ell}-\varepsilon_N}},
\end{equation}
This ratio can be of interest as \eqref{eq:kappa} suggests that the Sternheimer
solver converges the fastest for $n=1$ and the slowest for $n=N$.
\medskip

\begin{figure}
  \includegraphics[height=0.33\linewidth]{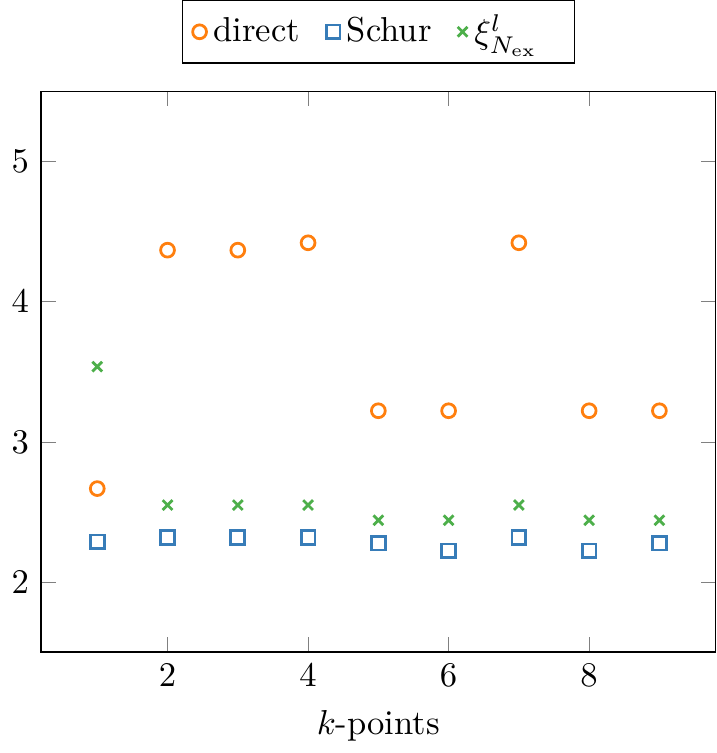}\hfill
  \includegraphics[height=0.33\linewidth]{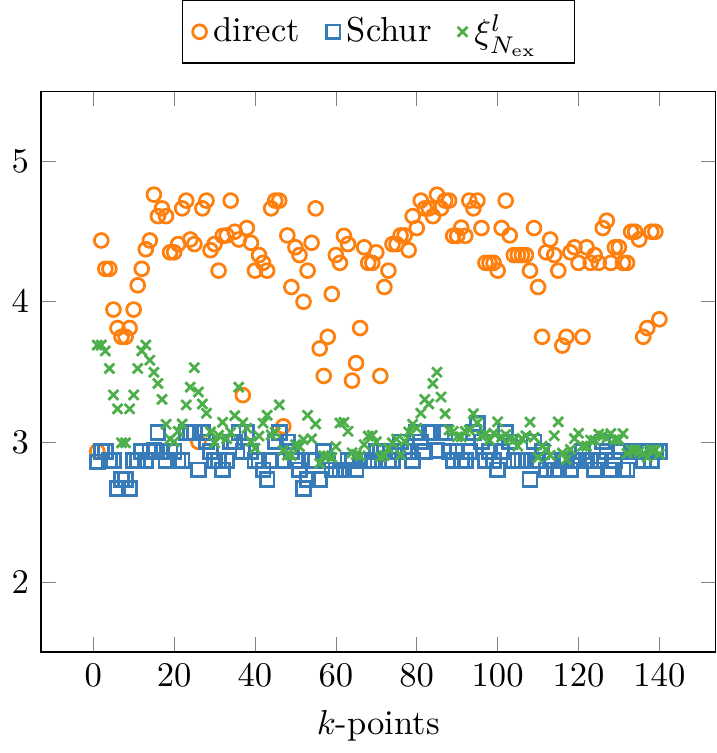}\hfill
  \includegraphics[height=0.33\linewidth]{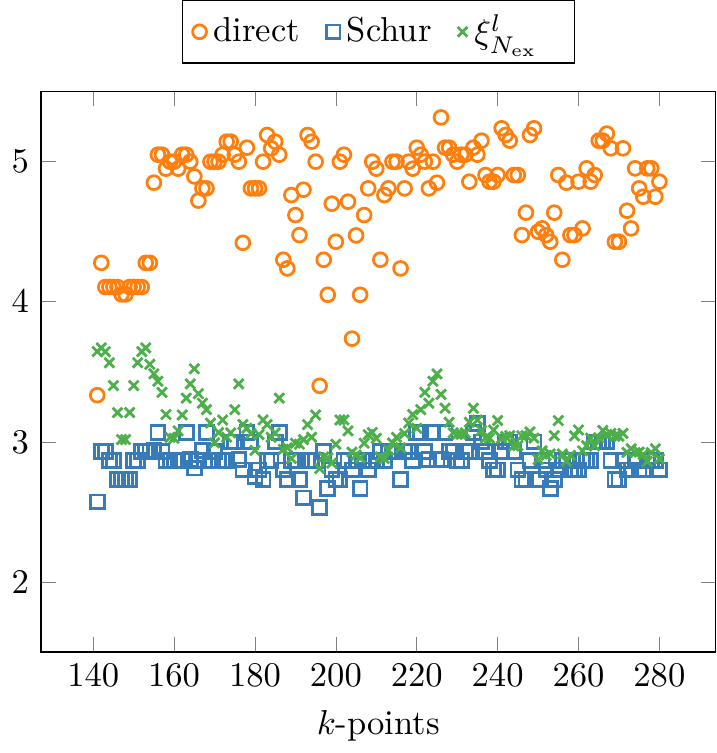}
  \caption{Comparison between the ratio $\xi_{\Nex}^{\ell}$ ($\times$) and the ratios of
    the experimental number of iterations between the first and last occupied bands,
    with ($\square$) and without ($\circ$) the Schur complement
    \eqref{eq:Schur}. On the $x$-axis is the index of the $k$-point. [Left]
    \ch{Al40} [Middle] \ch{Fe2MnAl} spin up channels [Right] \ch{Fe2MnAl}
    spin down channels.}
  \label{fig:ratios}
\end{figure}
We plot in \autoref{fig:ratios} the upper bound $\xi_{\Nex}^{\ell}$ as well as the
computed ratios between the number of iterations of the first and last bands
for the systems we considered in \autoref{sec:num}. These plots show that
$\xi_{\Nex}^{\ell}$ is indeed an upper bound of the actual ratio.  This bound does not
seem to be sharp however.  This is due to the successive approximations we
made to obtain this estimate. Plots in
\autoref{fig:ratios} also confirm that if, for every $k$-point, the ratio of
the number of iterations between the first and last occupied bands is assumed
to be an accurate indicator of the efficiency of the Sternheimer solver, then using the
Schur complement \eqref{eq:Schur} always make this ratio smaller.

\medskip

If we want the ratio of the number of iterations between the first and the
last occupied bands to be lower than some target ratio $\xi_T$ (for instance 3),
\autoref{fig:ratios} suggests that the computable ratio $\xi_{\Nex}^{\ell}$
can help in choosing the number of extra bands to reach this target ratio.
We propose in \autoref{algo:adaptive_strat_nextra} an adaptive algorithm to
select the number of extra bands as a post-processing step after termination of
the SCF.
The basic idea is that, given the initial output
$(\Phi,\Phi_{\rm ex}^{\ell})$ with $\ell=0$ of an SCF calculation,
one iterates $\Phi_{\rm ex}^\ell \to \Phi_{\rm ex}^{\ell+1}$ where $\Phi_{\rm ex}^{\ell}$
gathers the extra bands. At each iteration $\ell$, we compute
$\xi_{\Nex}^{\ell}$ and check if it is below the target ratio.  If not, we compute
more approximated eigenvectors, that we converge until the residual
$\norm{r_{N+\Nex}^{\ell}}$ is negligible with respect to
$\varepsilon_{N+\Nex}^{\ell}-\varepsilon_N$, and so on.
To generate such a residual, after adding a random extra band properly orthonormalized,
we update the extra bands using a LOBPCG with tolerance
\begin{equation}\label{eq:tol}
  {\rm tol} = (\varepsilon_{N+\Nex-1}^{\ell}-\varepsilon_N)/50.
\end{equation}
Note that we use $\varepsilon_{N+\Nex-1}^{\ell}$ instead of $\varepsilon_{N+\Nex}^{\ell}$:
this is done for the sake of simplicity, instead of updating the tolerance on the fly with
$\varepsilon_{N+\Nex}^{\ell}$ changing at each iteration of the LOBPCG.

\medskip
\begin{algorithm}[H]
  \SetAlgoSkip{bigskip}
  \KwData{target ratio $\xi_T$, $\Nex$, $\ell$, $\xi_{\Nex}^{\ell}$}
  \While{$\xi_{\Nex}^{\ell} > \xi_T$ }{
    add random extra band $\phi_{\rm new}$ in the orthogonal of $\Span(\Phi,\Phi_{\rm ex}^{\ell})$;\\
    $\Nex \leftarrow \Nex+1$;\\
    update on the fly the extra bands with tolerance from \eqref{eq:tol}
    using the LOBPCG method; \\
    $\Phi_{\rm ex}^{\ell+1} \leftarrow (\Phi_{\rm ex}^{\ell}, \phi_{\rm new})$ and
    $E_{\rm ex}^{\ell+1} \leftarrow (\Phi_{\rm ex}^{\ell+1})^*H_\rho\Phi_{\rm
      ex}^{\ell+1}$;\\
    $\ell\leftarrow \ell+1$;\\
    compute $\xi_{\Nex}^{\ell}$ with \eqref{eq:ratio};
  }
  \caption{Adaptive choice of the number of extra bands}
  \label{algo:adaptive_strat_nextra}
\end{algorithm}

\subsection{Numerical tests}

We test this strategy on the systems investigated
in \autoref{sec:num}, with different values for the target ratio $\xi_T$ in order
to see a noticeable improvement for each system. In practice, we suggest this
ratio to be between 2 and 3.

\medskip

We first start with the \ch{Al40} system.  \autoref{fig:ratios} [Left] suggests
that the default choice of extra bands
already gives satisfying results by reaching a ratio of approximately 2.5 for
all $k$-points but the $\Gamma$-point (for which there is no real issue with the
Sternheimer equation, according to \autoref{tab:Al40}). We thus run
\autoref{algo:adaptive_strat_nextra} with a smaller target ratio $\xi_T=2.2$.
We use as initial value for $\Nex$ the default value for each $k$-point.
Results
are plotted in \autoref{tab:adapt} [Left] and suggests adding 15 extra
bands. Running again the simulations from \autoref{sec:num} with 72 fully
converged bands and 18 additional, not fully converged, bands yields indeed an
improvement in the convergence of the CG when solving the Sternheimer equation
with the Schur complement method.
Moreover, in \autoref{fig:adapt}, the ratio $\xi_{\Nex}^{\ell}$ indeed
lies below the target ratio $\xi_T=2.2$, and matches this ratio for
the $k$-points that caused difficulties for the Sternheimer equation solver to converge.
In terms of computational time,
the number of Hamiltonian applications to compute the response has been
reduced from $\sim14,800$ with the default number of extra bands to $\sim12,800$. However,
running the algorithm required $\sim3,400$ additional Hamiltonian applications,
making the total amount of Hamiltonian applications higher than that of the Schur
approach with the standard heuristic.

\medskip

\begin{table}
  \small
  \begin{tabular}{ccc}
    \ch{Al40}, $\xi_T = 2.2$ & & \ch{Fe2MnAl}, $\xi_T = 2.5$ \\ \\
    \begin{tabular}{@{}cccc@{}}
\toprule
$k$-point                      & $1$  & $2$  & $5$  \\ \midrule
$N$                            & $69$ & $58$ & $67$ \\
$\text{default } N_{\rm ex}$   & $6$  & $17$ & $8$  \\
$\text{suggested } N_{\rm ex}$ & $21$ & $29$ & $12$ \\
\#iterations $n=1$ Schur    & $21$ & $19$ & $18$ \\
\#iterations $n=N$ Schur    & $32$ & $36$ & $28$ \\\bottomrule
\end{tabular}
 & &
    \begin{tabular}{@{}ccccc@{}}
\toprule
$k$-point / spin                      & $96 \uparrow$ & $96 \downarrow$ & $72 \uparrow$ & $72 \downarrow$ \\ \midrule
$N$                            & $28$          & $26$            & $29$          & $26$            \\
$\text{default } N_{\rm ex}$   & $10$          & $12$            & $9$           & $12$            \\
$\text{suggested } N_{\rm ex}$ & $16$          & $18$            & $17$          & $20$            \\
\#iterations $n=1$ Schur    & $15$          & $15$            & $15$          & $15$            \\
\#iterations $n=N$ Schur    & $36$          & $35$            & $35$          & $35$            \\ \bottomrule
\end{tabular}

  \end{tabular}
  \caption{Suggested number of extra bands for \ch{Al40} and \ch{Fe2MnAl}
    to reach the target
    ratio $\xi_T$, obtained with \autoref{algo:adaptive_strat_nextra} with
    default $\Nex$ as starting point, as well as the number of iterations to
    reach convergence with the newly suggested $\Nex$. Note that the ratio
    between iterations indeed lies below the target ratio $\xi_T$.}
  \label{tab:adapt}
\end{table}
\begin{figure}
  \centering
  \includegraphics[height=0.33\linewidth]{./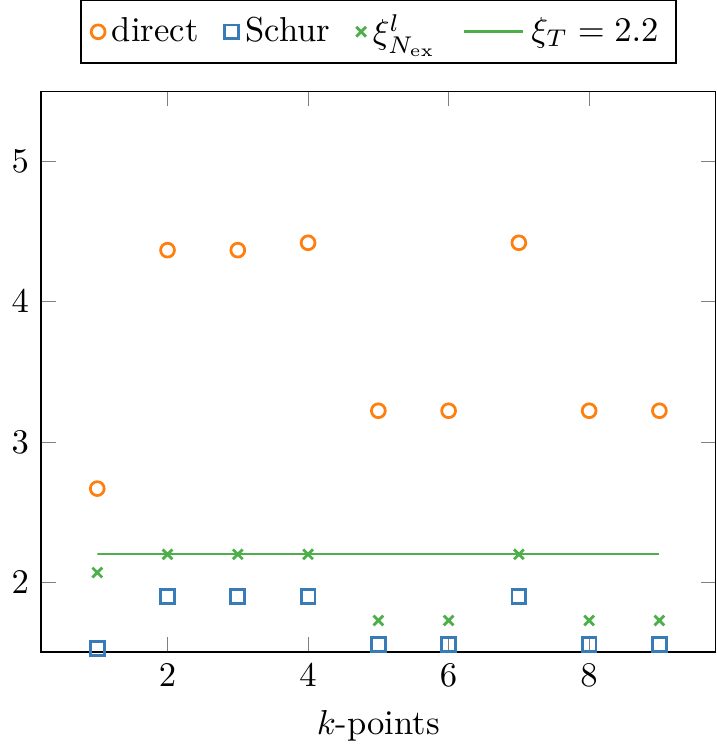}\hfill
  \includegraphics[height=0.33\linewidth]{./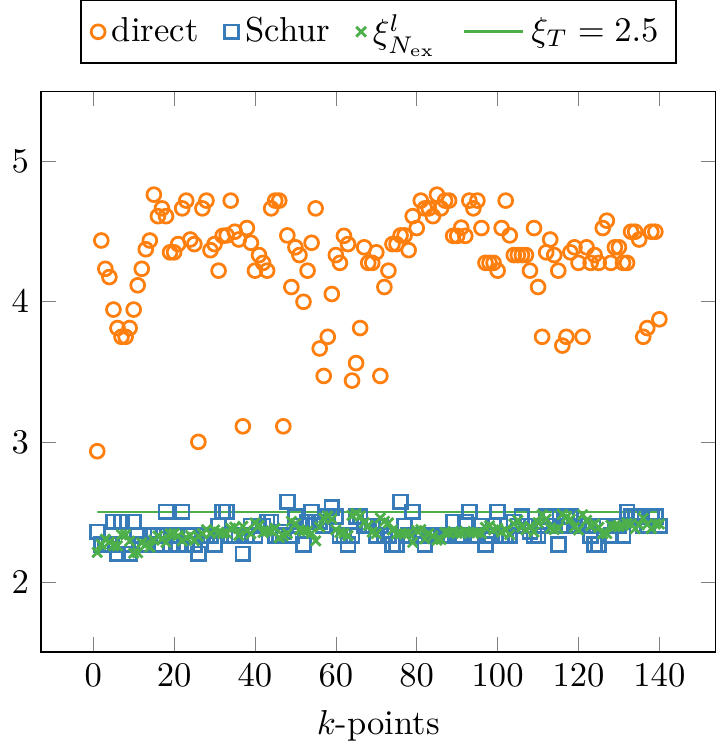}\hfill
  \includegraphics[height=0.33\linewidth]{./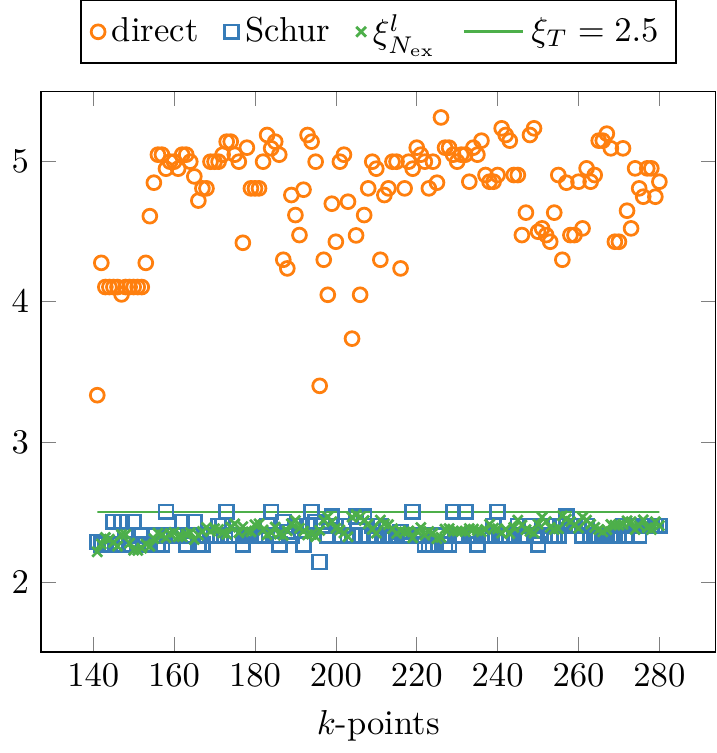}
  \caption{Comparison between the ratio $\xi_{\Nex}^{\ell}$ ($\times$) and the ratios of
    the measured number of iterations between the first and last occupied bands,
    with ($\square$) and without ($\circ$) the Schur complement
    \eqref{eq:Schur}. On the $x$-axis is the index of the $k$-point. [Left]
    \ch{Al40} with 15 additional extra bands. [Left]
    \ch{Fe2MnAl}~spin~$\uparrow$ with 8 additional bands. [Right]
    \ch{Fe2MnAl}~spin~$\downarrow$ with 8 additional bands.}
  \label{fig:adapt}
\end{figure}
Similarly, for \ch{Fe2MnAl}, we run \autoref{algo:adaptive_strat_nextra} with
target ratio $\xi_T=2.5$ as well as initial value the default $\Nex$ for all the 140
$k$-points and spin polarizations.
We present in \autoref{tab:adapt} [Right] the output for both spin
polarizations of two
particular $k$-points. The results are similar for the rest of the $k$-points
and the maximum additional extra bands suggested by the
algorithm is 8. We thus run the same simulations as in \autoref{sec:num} but
this time with 35 fully converged bands and $11$ extra, nonnecessarily
converged, bands. We indeed see for these two $k$-points
that the target ratio has been reached, and that the number of iterations to
converge is smaller than for the default choice we made in
\autoref{tab:Fe2MnAl}.
In \autoref{fig:adapt}, we plot the ratios
$\xi_{\Nex}^{\ell}$ as well as the actual ratios and they almost all lie
below the target ratio.  Contrarily to \ch{Al40}, we note however that the actual
measured ratios are not always below the indicator $\xi_{\Nex}^{\ell}$. In
terms of computational time, the number of Hamiltonian applications
has been reduced from $\sim208,000$ with the
default choice of $\Nex$ to $\sim179,000$. Again, running the algorithm required
$\sim49,000$ additional Hamiltonian applications, making it more expensive
than using the default number of extra bands.

\medskip

It appears that \autoref{algo:adaptive_strat_nextra} can be used to choose the
number of extra bands in order to reach a given ratio $\xi_T$. However, using
the algorithm as such is not useful in practice as it requires a too high
number of Hamiltonian applications, making this strategy less interesting than
the Schur approach we proposed with the default choice of extra bands.
Strategies to reduce the number of Hamiltonian applications in order to choose
an appropriate number of extra bands will be subject of future work.

\end{document}